\newtheorem{definition}{Definition}[section]
\newtheorem{theorem}[definition]{Theorem}
\newtheorem{lemma}[definition]{Lemma}
\newtheorem{proposition}[definition]{Proposition}
\newtheorem{remark}{Remark}[section]
\newtheorem{assumption}[definition]{Assumption}
\newcommand{\BLUE}[1]{\textcolor{blue}{#1}}
\numberwithin{equation}{section}
\journal{Journal of \LaTeX\ Templates}
\begin{document}

% Include the title
\begin{frontmatter}

	\title{A numerical method for reconstructing the potential in fractional Calder\'{o}n problem with a single measurement}

\author[1]{Xinyan Li}
\ead{xinyanli@email.sdu.edu.cn}

%\author[1]{xiaoming}
%\ead{aaa@fudan.edu.cn}
%
%\author[2]{xiaoming}
%\ead{aaa@fudan.edu.cn}

\address[1]{Research Center for Mathematics and Interdisciplinary Sciences, Shandong University, 266237,  Qingdao, Shandong, P.R. China}

% Abstract
\begin{abstract}
In this paper, we develop a numerical method for determining the potential in one and two dimensional fractional Calder\'{o}n problems with a single measurement. Finite difference scheme is employed to discretize the fractional Laplacian, and the parameter reconstruction is formulated into a variational problem based on Tikhonov regularization to obtain a stable and accurate solution. Conjugate gradient method is utilized to solve the variational problem. Moreover, we also provide a suggestion to choose the regularization parameter. Numerical experiments are performed to illustrate the efficiency and effectiveness of the developed method and verify the theoretical results.
\end{abstract}

	\begin{keyword}
		fractional Calder\'{o}n problem\sep fractional Laplacian\sep conjugate gradient method\sep inverse problem\sep Tikhonov regularization
		
	\end{keyword}
\end{frontmatter}
% % % % % % % % % % % % %
% Start of the sections %
% % % % % % % % % % % % %

% % % % % % % % % % % % % %
% Section - Introduction  %
% % % % % % % % % % % % % %
\section{\bf Introduction}

In this paper, we provide a numerical method to reconstruct the potential for fractional Calder\'{o}n problem with a single measurement. Fractional Calder\'on problem was proposed in \cite{GhoSal}, in which the fractional Schr\"odinger equation
\begin{equation}\label{fseghosh}
\left\{\begin{aligned}
((-\Delta)^s+q(x))u(x)&=0,\quad& &x\in\Omega,\\u(x)&=f(x),\quad& &x\in\Omega_e
\end{aligned}\right.
\end{equation}
is considered, where $s\in(0,1)$, $\Omega\subset\mathbb{R}^n$ is a bounded open set, $\Omega_e=\mathbb{R}^n\setminus\overline{\Omega}$, and $n\geq1$. In \eqref{fseghosh}, $(-\Delta)^su(x)$ can be defined by
\begin{equation}\label{def1}
	(-\Delta)^{s}u(x)=c_{n,s}  \text{ P. V. }\int_{\mathbb{R}^n}\frac{u(x)-u(x')}{|x-x'|^{n+2s}}dx',
\end{equation}
where P. V. denotes the principal value integral, $|x-x'|$ denotes the Euclidean distance between $x$ and $x'$, and the constant $c_{n,s}$ is given as the formula below
\begin{equation}\label{constantdef}
c_{n,s}=\frac{2^{2s}s\Gamma(\frac{n}{2}+s)}{\pi^{\frac{n}{2}}\Gamma(1-s)}.
\end{equation}
Furthermore, if $u$ belongs to the Schwartz space of rapidly decaying function, by \cite{Kwa17}, the
fractional Laplacian can be equivalently defined through the Fourier transform
\begin{equation}\label{def2}
(-\Delta)^su(x)=\mathcal{F}^{-1}(|\xi|^{2s}\mathcal{F}(u)).
\end{equation}
We will reconstruct $q(x)$ in \eqref{fseghosh} with a single measurement $\left.(-\Delta)^su\right|_{W}, W\subset\Omega_e$ numerically according to the theoretical result in \cite{GhoshJFA}.

Nowadays, fractional partial differential equations have attracted more and more attentions owing to successful applications in various fields such as quantum mechanics \cite{Las}, ground-water solute transport \cite{Ben13}, finance \cite{Guo18}, and stochastic dynamics \cite{MeeSik}. In practical applications, some important parameters in the equation are often difficult to be observed directly. Consequently, there has been increased focus on inverse problems related to fractional partial differential equations and their corresponding numerical methods. Among research on inverse problems of fractional partial differential equations, the time fractional equations have been widely investigated. For example, Cheng et al. studied the uniqueness of diffusion coefficient and fractional order in one-dimensional fractional diffusion equations \cite{CheNak}. Yamamoto and Zhang considered the conditional stability of a half-order fractional diffusion equation in determining a zeroth-order coefficient \cite{YamZha}. Sakamoto and Yamamoto analyzed the well-posedness of initial value/boundary value problems for fractional diffusion-wave equations, and some results about the uniqueness and stability are obtained \cite{SakYam}. Kirane and Malik investigated the existence and uniqueness of inverse source problems \cite{KirMal}. For more recent work on the theoreical aspect of time fractional inverse problems, see \cite{ZheChe,LiLiu,LiLuc}.  In terms of numerical methods of time fractional inverse problems, Liu and Yamamoto studied a backward problem for a time fractional partial differential equation to determine the initial status of the equation and implemented it numerically using a regularizing scheme\cite{LiuYam}. Zhang and Xu explored inverse source problems for time fractional diffusion equations and solved it numerically by assuming the source term as eigenfunction expansions series \cite{ZhaXu}. Sun and Wei provided the uniqueness for recovering the zeroth-order coefficient and fractional order of a time fractional diffusion equation simultaneously, and identified them numerically by introducing a Tikhonov variational problem and solving it through conjugate gradient method \cite{SunWei}. For more references, see \cite{BabBan,JiaLi,LiuWen}.

Recently, research on inverse problems for spatial fractional equations began to appear. Among them, fractional Calder\'on problem has been received widespread attention. Fractional Calder\'on problem is a generalization of Calder\'on problem \cite{Cal06} considering potential reconstruction in fractional Schr\"odinger equation \eqref{fseghosh}. In \cite{GhoSal}, Ghosh et al. constructed a Dirichlet-Neumann (DtN) map, and proved the unique determination of $q$ using the data of DtN map. Subsequently, R\"{u}land et al. studied the stability of the problem, gave the conclusion of logarithmic stability \cite{RulSala}, and proved the optimality of logarithmic stability in the literature \cite{RulSalb}. Based on the extension property of fractional Laplacian concluded by Caffarelli and Silvestre\cite{Caff}, and the analytic continuation property of the elliptic equation \cite{Hoem15}, Ghosh et al. proved that given a single $f$ and corresponding observation $\left.(-\Delta)^su\right|_{W}, W\subset\Omega_e$, one can uniquely reconstruct the potential $q$ \cite{GhoshJFA}. Compared with classical Calder\'on problem, this conclusion about fractional Calder\'on problem is quite different. Inverse problems about generalized form of fractional Schr\"odinger equations \eqref{fseghosh} have been studied over the past few years. For example, parameter reconstruction of the anisotropic fractional Schr\"odinger equation \cite{GhoLin,CaoLiu}, the nonlinear or unsteady fractional Schr\"odinger equation \cite{LaiLina,LaiLinb,KowLin}, and the fractional Schr\"odinger equation with the perturbation term \cite{CovMoen,Covi22,BhaGho}, and so on.

However, we would like to remark that research on numerical methods of fractional Calder\'on problem and its corresponding generalizations have not been paid much attention. The numerical schemes for the integral fractional Laplacian \eqref{def1} include finite difference method \cite{DuoZha,HuaObe,Li23}, finite element method \cite{BonBoe,AcoBor}, spectral method\cite{SheShe}, and spherical mean function method \cite{XuChe}. These methods will bring assistance in solving equations containing fractional Laplacian such as fractional Schr\"odinger equation. It is noticeable that most research on the equations with fractional Laplacian has focused on numerical methods of solving the forward problem, and inverse problems for these equations have become fruitful topics that offer great potential.

In this paper, we shall deal with an inverse problem of determining the potential term of fractional Schr\"odinger equation numerically. While the uniqueness of potential reconstruction in fractional Schr\"odinger equation with a single measurement was studied in \cite{GhoshJFA}, its numerical method has not been involved. We shall focus on the numerical methods and provide efficient numerical inversions with a numerical stability theory. The main contribution of this work is threefold: 
\begin{itemize}
\item We develop a fast finite difference method for two-dimensional fractional Sch\"odinger equations with inhomogeneous boundary conditions by truncating the computational domain, and error estimates are given to show the balance between discretization and truncation error;
\item We present a numerical method to reconstruct the potential in fractional Calder\'on problem with a single measurement for both one and two dimensional cases, which is achieved by employing conjugate gradient method to solve the given variational problem; 
\item We give a selection criterion for the regularization parameter and derive a logarithmic stability estimation. Numerical results corroborate the theoretical findings.
\end{itemize}

The following condition will be assumed in this paper:
\begin{assumption}\label{ass1}
If $u\in H^s(\mathbb{R}^n)$ solves
\begin{equation*}
\left\{\begin{aligned}
((-\Delta)^s+q(x))u(x)=0,\quad &x\in\Omega,\\u(x)=0,\quad &x\in\Omega_e,
\end{aligned}\right.
\end{equation*}
then $u\equiv0$. This is equivalent to the assumption that zero is not a Dirichlet eigenvalue of $(-\Delta)^s+q$ \cite{GhoSal}.
\end{assumption}

The rest of the paper is organized as follows. In Section 2, we discuss the well-posedness of the observation in order to establish a variational problem, and give a numerical method for two-dimensional fractional Schr\"odinger equation. In Section 3, numerical algorithms to reconstruct the potential are provided for both one and two dimensional cases by formulating the inverse problem into a variational problem, and we give some advice on parameter selection criterion with a logarithmic stability estimation. Numerical results are performed in Section 4 to illustrate the computational performance and verify the logarithmic stability estimation under the parameter selection criterion in our article. Finally, some concluding remarks are made in Section 5.

\section{\bf Overview and algorithms of the forward problem}
In this section, we analyse the well-posedness of the observation and provide numerical schemes for forward problem.

\subsection{Overview of the well-posedness}

In the following, we will use several $L^2$ based Sobolev spaces defined as \cite{McLean_2000}
\begin{equation*}
\begin{aligned}
&H^{\mu}(\mathbb{R}^n):=\left\{u \in L^2(\mathbb{R}^n); \int_{\mathbb{R}^n}\left(1+|\xi|^2\right)^{\mu}|\widehat{u}(\xi)|^2 d \xi<\infty\right\},\\
&\tilde{H}^{\mu}(U):=\text{closure of }C_c^{\infty}(U)\text{ in }H^{\mu}(\mathbb{R}^n),\\
&H^{\mu}(U):=\{u|_U:u\in H^{\mu}(\mathbb{R}^n)\},\\
\end{aligned}
\end{equation*}
where $U\subset\mathbb{R}^n$ is an open domain.

First, we recall a lemma about the existence, uniqueness and well-posedness of the weak solution of equation \eqref{fseghosh}.
\begin{lemma}\cite[Lemma 2.3.]{GhoSal}\label{exiuni}
Let $n\geq1,s\in(0,1)$, $\Omega$ be a bounded open set and $q\in L^{\infty}(\Omega)$. Suppose that Assumption \ref{ass1} holds. Let
$$B_q(v,w)=((-\Delta)^{\frac{s}{2}}v,(-\Delta)^{\frac{s}{2}}w)_{\mathbb{R}^n}+(qv|_{\Omega},w|_{\Omega})_{\Omega} ,v,w\in H^s(\mathbb{R}^n).$$
Then for any $f\in H^s(\mathbb{R}^n)$ and $g\in (\tilde{H}^s(\Omega))^*$, there is a unique solution $u\in H^s(\mathbb{R}^n)$ in \eqref{fseghosh} satisfying
$$B_q(u,w)=(g,w),\text{ for }w\in\tilde{H}^s(\Omega), \qquad u-f\in\tilde{H}^s(\Omega). $$
Moreover, the norm estimate
$$\|u\|_{H^s(\mathbb{R}^n)}\leq C(\|g\|_{(\tilde{H}^s(\Omega))^*}+\|f\|_{H^s(\mathbb{R}^n)})$$
holds with $C$ independent of $g$ and $f$.
\end{lemma}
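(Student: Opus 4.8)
The plan is to establish this as a standard application of the Lax–Milgram theorem to the bilinear form $B_q$, after first reducing the inhomogeneous exterior-data problem to a homogeneous one. The presence of a prescribed exterior value $u = f$ on $\Omega_e$ (equivalently $u - f \in \tilde H^s(\Omega)$) is handled by the usual lifting trick: write $u = f + v$ with $v \in \tilde H^s(\Omega)$ the genuine unknown. Substituting into the weak equation $B_q(u,w) = (g,w)$ for all $w \in \tilde H^s(\Omega)$ turns the problem into finding $v \in \tilde H^s(\Omega)$ satisfying $B_q(v,w) = (g,w) - B_q(f,w)$ for all test functions $w$. The right-hand side $\ell(w) := (g,w) - B_q(f,w)$ is a bounded linear functional on $\tilde H^s(\Omega)$, with $\|\ell\|_{(\tilde H^s(\Omega))^*} \lesssim \|g\|_{(\tilde H^s(\Omega))^*} + \|f\|_{H^s(\mathbb{R}^n)}$, which is exactly the combination appearing in the norm estimate.

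**Next I would** verify the two hypotheses of Lax–Milgram for $B_q$ restricted to $\tilde H^s(\Omega) \times \tilde H^s(\Omega)$. Boundedness is immediate: by Cauchy–Schwarz and $q \in L^\infty(\Omega)$,
\[
|B_q(v,w)| \le \|(-\Delta)^{s/2}v\|_{L^2}\,\|(-\Delta)^{s/2}w\|_{L^2} + \|q\|_{L^\infty}\,\|v\|_{L^2(\Omega)}\,\|w\|_{L^2(\Omega)} \le C\,\|v\|_{H^s(\mathbb{R}^n)}\,\|w\|_{H^s(\mathbb{R}^n)}.
\]
Coercivity is the one substantive point. On the subspace $\tilde H^s(\Omega)$ the Poincaré-type / fractional Sobolev inequality gives $\|(-\Delta)^{s/2}v\|_{L^2(\mathbb{R}^n)}^2 \gtrsim \|v\|_{H^s(\mathbb{R}^n)}^2$, so the principal part of $B_q$ already controls the full norm; the potential term $(qv,v)_\Omega$ is then treated as a lower-order perturbation. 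This is where Assumption \ref{ass1} enters in an essential way.

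**The main obstacle** is precisely that $B_q$ need not be coercive when $q$ takes negative values: the term $(qv,v)_\Omega$ can be negative and large, so one cannot conclude coercivity from positivity alone. The clean way around this is a Fredholm-alternative argument. I would split $B_q = B_0 + K$, where $B_0(v,w) = ((-\Delta)^{s/2}v,(-\Delta)^{s/2}w)_{\mathbb{R}^n}$ is coercive on $\tilde H^s(\Omega)$ and $K(v,w)=(qv,w)_\Omega$ induces, via the compact embedding $\tilde H^s(\Omega) \hookrightarrow L^2(\Omega)$, a compact operator on $\tilde H^s(\Omega)$. By the Fredholm alternative, existence and uniqueness for the equation $B_q(v,\cdot) = \ell$ hold if and only if the homogeneous problem $B_q(v,w)=0$ for all $w$ has only the trivial solution $v \equiv 0$. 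But a nonzero such $v$ would be exactly a nontrivial solution of the homogeneous exterior-value problem in Assumption \ref{ass1}, which the assumption rules out (equivalently, zero is not a Dirichlet eigenvalue of $(-\Delta)^s + q$). Hence the solution operator is well defined and bounded, and unwinding the lifting $u = f + v$ together with the bound on $\|\ell\|$ yields the stated estimate $\|u\|_{H^s(\mathbb{R}^n)} \le C(\|g\|_{(\tilde H^s(\Omega))^*} + \|f\|_{H^s(\mathbb{R}^n)})$ with $C$ independent of $f$ and $g$.
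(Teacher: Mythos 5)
Your proposal is correct, and it is essentially the canonical argument: note that the paper does not prove this statement at all, but recalls it verbatim from \cite{GhoSal}, and the proof given there proceeds exactly along your lines --- lifting the exterior datum $u=f+v$ with $v\in\tilde H^s(\Omega)$, boundedness of $B_q$, a G\aa rding-type bound for the principal part on $\tilde H^s(\Omega)$, compactness of the embedding $\tilde H^s(\Omega)\hookrightarrow L^2(\Omega)$ making the potential term a compact perturbation, and the Fredholm alternative whose injectivity hypothesis is precisely Assumption \ref{ass1}. The norm estimate then follows, as you say, from boundedness of the resulting solution operator together with the bound $\|\ell\|_{(\tilde H^s(\Omega))^*}\leq C(\|g\|_{(\tilde H^s(\Omega))^*}+\|f\|_{H^s(\mathbb{R}^n)})$.
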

Next, we provide the regularity estimate of the observation, and further illustrate that using the $L^2$ norm to estimate the residual of the Tikhonov regularization functional is reasonable under certain conditions.
\begin{proposition}(The regularization of the observation)
Assume $u$ is the solution of \eqref{fseghosh}, $f\in C_c^{\infty}(\Omega_e)$, $q\in L^{\infty}(\Omega)$, $W_1,W_2\subset\Omega_e$, $\mathrm{supp}(f)\subset W_1$, $\overline{W_1}\cap\overline{\Omega}=\phi$, and $\overline{W_2}\cap\overline{\Omega}=\phi$.  Then $\left. (-\Delta)^su\right|_{W_2}\in L^2(W_2)$.
\end{proposition}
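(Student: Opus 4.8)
The plan is to exploit the fact that the measurement region $W_2$ is separated from $\Omega$, so that the genuinely singular part of the fractional Laplacian never interacts with the region where $u$ is merely $H^s$. First I would record the two facts I will lean on: by Lemma \ref{exiuni} the solution satisfies $u\in H^s(\mathbb{R}^n)$, so in particular $u|_\Omega\in L^2(\Omega)$; and since $\overline\Omega$ is compact, $\overline{W_2}$ is closed, and the two are disjoint, the distance $d:=\mathrm{dist}(\overline{W_2},\overline\Omega)$ is strictly positive. I would then split the solution as $u=f+w$ with $w:=u-f\in\tilde H^s(\Omega)$ (again from Lemma \ref{exiuni}, where $f$ is regarded as its zero-extension in $C_c^\infty(\mathbb{R}^n)$), and use linearity, $(-\Delta)^su=(-\Delta)^sf+(-\Delta)^sw$, treating the two summands separately on $W_2$.

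The term $(-\Delta)^sf$ is harmless: since $f\in C_c^\infty(\mathbb{R}^n)\subset H^{2s}(\mathbb{R}^n)$ and $(-\Delta)^s$ maps $H^{2s}(\mathbb{R}^n)$ boundedly into $L^2(\mathbb{R}^n)$ (equivalently, $|\xi|^{2s}\widehat f(\xi)\in L^2$ because $\widehat f$ is Schwartz and $|\xi|^{4s}$ is integrable at the origin), we get $(-\Delta)^sf\in L^2(\mathbb{R}^n)$, whence its restriction to $W_2$ lies in $L^2(W_2)$.

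The heart of the argument is the term $(-\Delta)^sw$ restricted to $W_2$. Here $w$ is supported in $\overline\Omega$ and equals $u$ on $\Omega$ (because $f$ vanishes on $\overline\Omega$), so it is only $H^s$, and $(-\Delta)^sw$ is a priori just an element of $H^{-s}(\mathbb{R}^n)$. I would identify it on $W_2$ by testing against an arbitrary $\phi\in C_c^\infty(W_2)$ and writing the pairing through the Gagliardo bilinear form,
\begin{equation*}
\langle (-\Delta)^sw,\phi\rangle=\frac{c_{n,s}}{2}\iint_{\mathbb{R}^n\times\mathbb{R}^n}\frac{(w(x)-w(y))(\phi(x)-\phi(y))}{|x-y|^{n+2s}}\,dx\,dy.
\end{equation*}
Since $\mathrm{supp}\,w\subset\overline\Omega$ and $\mathrm{supp}\,\phi\subset W_2$ are at distance at least $d>0$, the diagonal products $w(x)\phi(x)$ and $w(y)\phi(y)$ vanish, the kernel is bounded on the remaining region, and the expression collapses to $-c_{n,s}\int_{W_2}\phi(x)\big(\int_\Omega w(y)|x-y|^{-(n+2s)}\,dy\big)\,dx$. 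This shows that, as a function on $W_2$,
\begin{equation*}
(-\Delta)^su(x)=(-\Delta)^sf(x)-c_{n,s}\int_\Omega\frac{u(y)}{|x-y|^{n+2s}}\,dy,
\end{equation*}
with no principal value needed because $x\in W_2$ keeps the kernel nonsingular. I would then bound the last integral in $L^2(W_2)$: by Cauchy–Schwarz, $\big|\int_\Omega u(y)|x-y|^{-(n+2s)}dy\big|^2\le\|u\|_{L^2(\Omega)}^2\int_\Omega|x-y|^{-2(n+2s)}dy$, so it suffices to check $\int_{W_2}\int_\Omega|x-y|^{-2(n+2s)}\,dy\,dx<\infty$. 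On the bounded part of $W_2$ the crude bound $\int_\Omega|x-y|^{-2(n+2s)}dy\le|\Omega|\,d^{-2(n+2s)}$ together with finiteness of the measure suffices, while for large $|x|$ one has $|x-y|\gtrsim|x|$ for $y\in\Omega$, giving an integrand of size $|x|^{-2(n+2s)}$, integrable at infinity precisely because $2(n+2s)>n$. Combining the two summands yields $(-\Delta)^su|_{W_2}\in L^2(W_2)$.

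I would expect the main obstacle to be exactly the middle step: rigorously reducing the singular principal-value operator acting on the low-regularity piece $w$ to the benign non-singular integral over $\Omega$. The separation of supports (the positive distance $d$) is what makes this work, and phrasing it through the bilinear form avoids any appeal to a pointwise principal value for a function that is only in $H^s$. The convergence of the tail integral over a possibly unbounded $W_2$ is a secondary point, settled by the $|x|^{-(n+2s)}$ decay of the nonlocal kernel.
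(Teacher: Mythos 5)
Your proof is correct and takes essentially the same route as the paper: the identical decomposition of $u$ into its interior part and the zero-extended exterior datum $f$, reduction of $(-\Delta)^s$ on $W_2$ to a nonsingular integral over $\Omega$ using the positive distance between $\overline{W_2}$ and $\overline{\Omega}$, and a Cauchy--Schwarz (H\"older) bound of that integral in $L^2(W_2)$. Your extra steps --- justifying the pointwise formula for the merely-$H^s$ piece via the Gagliardo bilinear form, and verifying convergence of the double integral when $W_2$ is unbounded --- simply make rigorous what the paper asserts directly.
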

\begin{proof}
Let $u=u_I+\breve{f}$, where
\begin{equation}\label{uIf}
u_I=\left\{\begin{aligned}
u,\quad &x\in\Omega,\\0,\quad &x\in\mathbb{R}^n\setminus\Omega,
\end{aligned}\right. \text{ and }
\breve{f}=\left\{\begin{aligned}
0,\quad &x\in\overline{\Omega},\\f,\quad &x\in\Omega_e.
\end{aligned}\right.
\end{equation}
For $f\in C_c^{\infty}(\Omega_e)$, one obtains that $(-\Delta)^s\breve{f}(x)\in L^2(W_2)$.
Moreover, based on Lemma \ref{exiuni}, $u\in H^s(\mathbb{R}^n)$, so that $u_I\in L^2(\Omega)$. For $\mathrm{dist}(\partial\Omega,\partial W_2)>0$, there is
\begin{equation*}
\begin{aligned}
\|(-\Delta)^su_I\|_{L^2(W_2)}^2&=\int_{W_2}\left(c_{n,s}  \text{ P. V. }\left(\int_{\mathbb{R}^n}\frac{u_I(x)-u_I(x')}{|x-x'|^{n+2s}}dx'\right)\right)^2dx\\
&=\int_{W_2}\left(c_{n,s}  \text{ P. V. }\left(\int_{\Omega}\frac{-u_I(x')}{|x-x'|^{n+2s}}dx'\right)\right)^2dx\\
&\leq C\int_{W_2}\left(\int_{\Omega}(u_I(x'))^2dx'\right)\left(\int_{\Omega}\frac{1}{|x-x'|^{2n+4s}}dx'\right)dx\\
&\leq C\int_{W_2}\left(\int_{\Omega}\frac{1}{|x-x'|^{2n+4s}}dx'\right)dx\\
&<\infty,
\end{aligned}
\end{equation*}
where the inequality holds for H\"older inequality, and $C$ is a positive constant. Due to the linearity of $L^2$ normed space, we obtain $\left. (-\Delta)^su\right|_{W_2}\in L^2(W_2)$.
\end{proof}
\subsection{The finite difference scheme for forward problem}\label{ss2_2}
In general,  one seeks the solution of a variational problem by solving the forward problem and updating the target. As a result, before performing inversion algorithms, we need to provide a suitable numerical method for the forward problem. For the reason that the external term $f$ in \cite[Theorem 1.]{GhoshJFA} is truncated in $W_1\in \Omega_e$, we consider the difference scheme of the equation
\begin{equation}\label{fsee2}
	\left\{
	\begin{aligned}
		(-\Delta)^su_R(x)+q(x)u_R(x) & =g(x), &  & x  \in (-L,L),                      \\
		u_R(x)            & =f(x), &  & x \in (-R,R)\setminus(-L,L),            \\
		u_R(x)            & =0,    &  & x \in \mathbb{R}\setminus(-R,R)
	\end{aligned}
	\right.
\end{equation}
by introducing the truncation parameter $R$, where $f(x)$ is positive and decay in the direction $|x|\to+\infty$. For finite difference scheme one-dimensional equation \eqref{fsee2}, we refer to \cite{Li23}. The two-dimensional truncated equation can be written as
\begin{equation}\label{fse2d}\hspace{-5mm}%{}
	\left\{
	\begin{aligned}
		(-\Delta)^su_R(x,y)+q(x,y)u_R(x,y) & =g(x,y), & (x,y) \in (-L,L)^2,   \\
		u_R(x,y)&=f(x,y), &(x,y)\in(-R,R)^2\setminus(-L,L)^2,\\
        u_R(x,y)&=0,&(x,y)\in\mathbb{R}^2\setminus(-R,R)^2,
	\end{aligned}
	\right.
\end{equation}
where $s\in(0,1)$, $f(x,y)$ is positive and decay in the direction $(x^2+y^2)^{\frac{1}{2}}\to+\infty$. Now we give a finite difference scheme for \eqref{fse2d}. For a given positive integer $N$, we denote the space step size by $h=2L/N$, and then for $i,j\in\mathbb{Z}$, the discrete grid can be defined as $x_i=-L+ih,y_j=-L+jh$. Write $\xi=|x-x'|$, $\eta=|y-y'|$, $M=\lfloor\frac{R-L}{h}\rfloor+1(R>L)$, and define the domains and function
\begin{equation*}
\begin{aligned}
&D_1=(0,2L)^2, \\&D_2=(0,L+R)^2\setminus(0,2L)^2,\\
&D_3=\mathbb{R}_+^2\setminus (D_1\cup D_2)=\{(\xi,\eta)|\xi\geq L+R\text{ or }\eta\geq L+R\},\\
&\mathcal{U}_R(x,y,\xi,\eta)=(\xi^2+\eta^2)^{-(1+s)} \left(\sum_{k_1,k_2=0,1} u_R(x+(-1)^{k_1}\xi,y+(-1)^{k_2}\eta)-4u_R(x,y)\right).
\end{aligned}
\end{equation*}
The integral fractional Laplacian operator at the grid point $(x_i,y_j)(1\leq i,j\leq N-1)$ can be discretized to be
\begin{equation}\label{B12d}
\begin{aligned}
&(-\Delta)^su_R(x_i,y_j)\\=&-c_{2,s}\int_0^{\infty}\int_0^{\infty} \mathcal{U}_R(x_i,y_j,\xi,\eta)d\xi d\eta\\
=&-c_{2,s}\Bigg(\int_{D_1}\mathcal{U}_R(x_i,y_j,\xi,\eta)d\xi d\eta-4u_R(x_i,y_j)\int_{D_2\cup D_3}
\frac{1}{(\xi^2+\eta^2)^{1+s}}d\xi d\eta\\
 &\left. +\int_{D_2}(\xi^2+\eta^2)^{-(1+s)}\left(\sum_{k_1,k_2=0,1}f(x_i+(-1)^{k_1}\xi,y_j+(-1)^{k_2}\eta)\right)d\xi d\eta\right)\\
=&-c_{2,s}I_{i,j}+\mathcal{O}(h^{\kappa})-c_{2,s}(F_{R,i,j}+\epsilon_{R,i,j}).
\end{aligned}
\end{equation}
where $I_{i,j}$ is the discretization scheme of $\int_{D_1}\mathcal{U}_R(x_i,y_j,\xi,\eta)d\xi d\eta-4u_R(x_i,y_j)\int_{D_2\cup D_3}
\frac{1}{(\xi^2+\eta^2)^{1+s}}d\xi d\eta$, $\mathcal{O}(h^{\kappa})$ is the error of discretization, $F_{R,i,j}$ refers to the approximation of the integral
\begin{equation*}
\int_{D_2}(\xi^2+\eta^2)^{-(1+s)}\left(\sum_{k_1,k_2=0,1}f(x_i+(-1)^{k_1}\xi,y_j+(-1)^{k_2}\eta)\right)d\xi d\eta
\end{equation*}
by using numerical quadrature, the numerical quadrature error $\epsilon_{R,i,j}$ is a sufficiently small constant, and $c_{2,s}$ is defined by \eqref{constantdef}. Denote $u_{R,i,j}=u_R(x_i,y_j)$, and we have
\begin{equation*}
\begin{aligned}
I_{i,j}=&a_{00}u_{R,i,j}+\sum_{m=0}^{N}\left(\sum_{n=0\atop m+n\neq0}^{N}a_{mn}u_{R,i-m,j-n}+\sum_{n=1}^{N}a_{mn}u_{R,i-m,j+n}\right)\\ &+\sum_{m=1}^{N}\left(\sum_{n=0\atop m+n\neq0}^{N}a_{mn}u_{R,i+m,j-n}+\sum_{n=1}^{N}a_{mn}u_{R,i+m,j+n}\right),
\end{aligned}
\end{equation*}
where
$$a_{00}=-2\sum_{m=1}^{N}(a_{m0}+a_{0m})-4\sum_{m,n=1}^{N}a_{mn}-4\int_{\mathbb{R}^2_+\setminus D_1}(\xi^2+\eta^2)^{-(1+s)}d\xi d\eta,$$
and when $0\leq m,n\leq N,m+n\neq0$, we denote
\begin{equation*}
\begin{aligned}
a_{mn}=&\frac{2^{\sigma(m,n)}}{4((mh)^2+(nh)^2)^{\frac{\gamma}{2}}}\bigg(\int_{T_{mn}}(\xi^2+\eta^2)^{\frac{\gamma-(2+2s)}{2}}d\xi d\eta\\&\qquad\qquad\qquad+\bar{c}_{mn}\left\lfloor\frac{\gamma}{2}
\right\rfloor\int_{0}^{h}\int_{0}^{h}(\xi^2+\eta^2)^{\frac{\gamma-(2+2s)}{2}}d\xi d\eta\bigg),
\end{aligned}
\end{equation*}
where $\sigma(m,n)$ means the number of zeros of $m$ and $n$, the splitting parameter $\gamma\in(2s,2]$,
$$T_{mn}=(((m-1)h,(m+1)h)\times((n-1)h,(n+1)h))\cap[0,2L]^2,\quad 0\leq m,n\leq N, mn\neq0,$$
and
\begin{equation*}
\bar{c}_{mn}=\left\{\begin{aligned}
1,~~&m=0,n=1\text{ or }m=1,n=0,\\
-1,~~&m=1,n=1,\\
0,~~&\text{others}.
\end{aligned}\right.
\end{equation*}
The coefficient $\kappa$ depends on the regularity of the exact solution $u$ of the nonhomogeneous fractional Sch\"odinger equation
\begin{equation}\label{fse22}
	\left\{
	\begin{aligned}
		(-\Delta)^su(x,y)+q(x,y)u(x,y) & =g(x,y), & (x,y) \in (-L,L)^2,   \\
		u(x,y)&=f(x,y), &(x,y)\in\mathbb{R}^2\setminus(-L,L)^2\\
	\end{aligned}
	\right.
\end{equation}
and the value of splitting parameter $\gamma$. For $u\in C^{\lfloor 2s\rfloor,2s-\lfloor2s\rfloor+\varepsilon}(\mathbb{R}^2)$,  $0<\varepsilon\leq1+\lfloor2s\rfloor-2s$ and $\gamma\in(2s,2]$, $\kappa=\varepsilon$. For $u\in C^{2+\lfloor 2s\rfloor,2s-\lfloor2s\rfloor+\varepsilon}(\mathbb{R}^2)$, $0<\varepsilon\leq1+\lfloor2s\rfloor-2s$ and $\gamma=2$, $\kappa=2$.

Now we give the finite difference approximation of \eqref{B12d} as
\begin{equation*}
\begin{aligned}
&(-\Delta)_{h,\gamma}^su_{R,i,j}=-c_{2,s}\left[a_{00}u_{R,i,j}+\sum_{m=0}^{N}\left(\sum_{n=0\atop m+n\neq0}^{N}a_{mn}u_{R,i-m,j-n}+\sum_{n=1}^{N}a_{mn}u_{R,i-m,j+n}\right)\right. \\ &\left. +\sum_{m=1}^{N}\left(\sum_{n=0\atop m+n\neq0}^{N}a_{mn}u_{R,i+m,j-n}+\sum_{n=1}^{N}a_{mn}u_{R,i+m,j+n}\right)\right]-c_{2,s}F_{R,i,j},1\leq i,j\leq N-1,
\end{aligned}
\end{equation*}
where $u_{R,i,j}=f(x_i,y_j)$ when $i,j$ satisfy $(x_i,y_j)\in(-R,R)^2\setminus(-L,L)^2$, and $u_{R,i,j}=0$ when $(x_i,y_j)\in\mathbb{R}^2\setminus(-R,R)^2$.

For $1\leq i,j\leq N-1$, denote $U_{R,i,j}$ as the finite difference estimate of $u_{R,i,j}$, denote the vector and block vector
$$\begin{aligned}&\mathbf{u}_{R,x,j}=(u_{R,1,j},u_{R,2,j},\cdots,u_{R,N-1,j}), \mathbf{u}_{R,2}=(\mathbf{u}_{R,x,1},\mathbf{u}_{R,x,2},\cdots,\mathbf{u}_{R,x,N-1})^{\top},\\
&\mathbf{U}_{R,x,j}=(U_{R,1,j},U_{R,2,j},\cdots,U_{R,N-1,j}), \mathbf{U}_{R,2}=(\mathbf{U}_{R,x,1},\mathbf{U}_{R,x,2},\cdots,\mathbf{U}_{R,x,N-1})^{\top}.\end{aligned}$$
We write $u_{i,j}$ as the solution of fractional Schr\"odinger equation \eqref{fse22} on the point $(x_i,y_j)$, $1\leq i,j\leq N-1$. Define
$$\mathbf{u}_{x,j}=(u_{1,j},u_{2,j},\cdots,u_{N-1,j}), \mathbf{u}_{2}=(\mathbf{u}_{x,1},\mathbf{u}_{x,2},\cdots,\mathbf{u}_{x,N-1})^{\top}.$$
Similar as the deduction in \cite{DuoZha,Li23}, the local truncation error satisfies
$$\|(-\Delta)^s\mathbf{u}_2-(-\Delta)_{h,\gamma}^s\mathbf{u}_{R,2}\|\leq C(h^{\kappa}+R^{-2s}\max_{r\in\partial[-R,R]^2}f(r))+\epsilon_{2D},$$
where $C$ is a positive constant independent of $h$ and $R$, and $\epsilon_{2D}$ is an arbitrarily small positive constant related to numerical quadrature error. When the corresponding discrete matrix of the operator $(-\Delta)^s+q$ is positive definite, we have
\begin{equation}\label{equT8}
\|\mathbf{u}_2-\mathbf{U}_{R,2}\|\leq C(h^{\kappa}+R^{-2s}\max_{r\in\partial[-R,R]^2}f(r))+\epsilon_{2D}.
\end{equation}

\begin{remark}
Notice that the solution error in \eqref{equT8} can be divided into two parts, i.e.,  the discretization error $h^{\kappa}$ and the truncation error $R^{-2s}\max_{r\in\partial[-R,R]^2}f(r)$.  In order to balance them, we can choose $R$ such that $R^{-2s}\max_{r\in\partial[-R,R]^2}f(r)=\mathcal{O}(h^{\kappa})$, and thus obtain $||\mathbf{u}_2-\mathbf{U}_{R,2}||_{\infty}\leq Ch^{\kappa}$.
\end{remark}

\begin{remark}\label{accelerate}
It is worth noting that the matrices of our numerical methods are Toeplitz matrices for one-dimensional equations, and Toeplitz-block-block-Toeplitz matrices for two-dimensional equations. This implies that the system of linear algebraic equations could be efficiently solved by many well-developed Krylov subspace methods, where the matrix vector
multiplication operations can be efficiently performed using fast Fourier transform \cite{Ste,DuWan}.
\end{remark}
\section{\bf The inversion algorithm}
\subsection{One dimension inversion}
In this subsection, we assume that 
\begin{enumerate}[(i)]
\item Domain $\Omega\subset\mathbb{R}$ is a bounded open set satisfying strong local Lipschitz condition, $W_1,W_2\subset\Omega_e$ are open sets, and $\overline{\Omega}\cap \overline{W_1},\overline{\Omega}\cap\overline{W_2}=\emptyset$;
\item Functions $q$, $u$, $f$ satisfy the equation \eqref{fseghosh}, $q=0$ near $\partial\Omega$, and $f\in C_c^{\infty}(\Omega_e)\setminus\{0\}$, $\mathrm{supp}(f)\subset W_1$;
\item The observation $h^{\delta}$ satisfies
$$\|h^{\delta}(x)-(-\Delta)^su(x)\|_{L^2(W_2)}\leq\delta.$$
\end{enumerate}

Since the theoretical result in \cite{GhoshJFA} holds under the condition $q\in L^{\infty}(\Omega)$, we notice that when $\Omega\subset\mathbb{R}$ satisfies strong local Lipschitz condition, the embedding property $H^1(\Omega)\hookrightarrow L^{\infty}(\Omega)$ holds \cite{Adams}. Thus, the forward operator is introduced by denoting
\begin{equation*}
\begin{aligned}
F_1:H^1(\Omega)&\to L^2(W_2),\\ q&\mapsto\left. (-\Delta)^su\right|_{W_2}.
\end{aligned}
\end{equation*}
Besides, we introduce a Tikhonov regularization functional
\begin{equation}\label{var_pro}
J_1(q)=\frac{1}{2}||F_1(q)-h^{\delta}||_{L^2(W_2)}^2+\frac{\alpha}{2}||q'||^2_{L^2(\Omega)},
\end{equation}
where $\alpha$ is the regularization parameter. Here in the functional \eqref{var_pro}, the first part is used to control the value of $F_1(q)$ close to the measured data, and the second part is used to stabilize the derivative of the potential $q$. Then a feasible way to solve the inverse problem here is to solve the following minimization problem
\begin{equation}\label{vp_min}
J_1(q_{\alpha}^{\delta})=\min_{q\in H^1(\Omega)}J_1(q).
\end{equation}
\begin{remark}
The proof of the existence of the minimizer in \eqref{vp_min} is similar to that in \cite{DinZhe,SunWei}. However, it is difficult to find the minimizer due to machine precision. We could handle it by setting appropriate stopping rule to let $J_1(q_{\alpha}^{\delta})$ be closer to $\inf J_1(q)$.
\end{remark}
When $W_1=W_2$, the convergence of the minimization problem has the following property.
\begin{theorem}\label{stabt1}
Let $\Omega\subset\mathbb{R}$ be bounded non-empty Lipschitz open set, $W_1\subset\mathbb{R}$ be open set,  $\overline{\Omega}\cap\overline{W_1}=\emptyset$, and let $s\in [\frac{1}{4}, \frac{1}{2}]$. Assume that $f$, $q_r$ are respectively the boundary term and potential term of \eqref{fseghosh}. In addition, suppose the following conditions hold:
\begin{enumerate}[(i)]
\item For $\epsilon>0$, $f$ is chosen by $f\in \tilde{H}^{s+\epsilon}(W_1)\setminus\{0\}$, and
\begin{equation*}
\frac{\|f\|_{H^s(W_1)}}{\|f\|_{L^2(W_1)}}\leq C
\end{equation*}
for $C>0$;
\item Suppose that $\mathrm{supp}(q_r),\mathrm{supp}(q_{\alpha}^{\delta})\subset\Omega'\Subset\Omega$, where $\Omega'\Subset\Omega$ means that $\overline{\Omega'}\subset\Omega$ and $\overline{\Omega'}$ is a compact subset of
$\mathbb{R}$ \cite{Adams}, let the real potential $q_r=0$ near $\partial\Omega$,
\begin{equation*}
F_1(q_r)=h_r:=\left. (-\Delta)^su_r\right|_{W_1},~~q_r\in H^1(\Omega),
\end{equation*}
where $u_r$ is the real solution of \eqref{fseghosh} with $q_r$, and consider that $q_{\alpha}^{\delta}\in H^1(\Omega)$ satisfies
\begin{equation*}
J_1(q_{\alpha}^{\delta})\leq \inf_{q\in H^1(\Omega)}J_1(q)+\delta^2;
\end{equation*}
\item Assume that the observation satisfies
\begin{equation*}
\|h^{\delta}-h_r\|_{H^{-s}(W_1)}\leq\delta.
\end{equation*}
\end{enumerate}
Let $\alpha>0$ be such that  $\alpha\sim\delta^2$ when $\delta\downarrow0$.
Then
\begin{equation*}
\|q_{\alpha}^{\delta}-q_r\|_{L^{\infty}(\Omega)}=\mathcal{O}(\omega(\delta)),
\end{equation*}
as $\delta\downarrow0$, where
$$\omega(\delta)=C_1|\log(C_1\delta)|^{-\nu},$$
and $\nu>0$, $C_1>1$ only depend on $\Omega,W_1,s,C,n,\|f\|_{H^{s+\epsilon}(W_1)},\|q_{\alpha}^{\delta}\|_{H^1(\Omega)}, \|q_r\|_{H^1(\Omega)}$.
\end{theorem}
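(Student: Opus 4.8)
The plan is to split the argument into a \emph{soft} variational step and a \emph{hard} stability step. In the first step I would convert the approximate-minimality of $q_\alpha^\delta$ into a bound showing that the measurement it generates is $O(\delta)$-close to the true data $h_r$, together with a uniform a priori bound on $q_\alpha^\delta$ in $H^1(\Omega)$ (hence, by the one-dimensional embedding, in $L^\infty(\Omega)$). In the second step I would feed this measurement fidelity into a logarithmic stability estimate for the fractional Calder\'on problem, of the quantitative unique-continuation / Runge-approximation type established in \cite{RulSala}, to obtain the claimed $L^\infty$ rate $\omega(\delta)$. The uniform a priori bound is exactly what allows the final constants $C_1,\nu$ to depend only on the quantities listed in the statement.

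For the soft step, since $\inf_{q} J_1(q) \le J_1(q_r)$, hypothesis (ii) gives $J_1(q_\alpha^\delta) \le J_1(q_r) + \delta^2$. Because $F_1(q_r) = h_r$ exactly, expanding both functionals and discarding the nonnegative regularization term on the left yields $\tfrac12\|F_1(q_\alpha^\delta) - h^\delta\|^2 \le \tfrac12\|h_r - h^\delta\|^2 + \tfrac{\alpha}{2}\|q_r'\|_{L^2(\Omega)}^2 + \delta^2$. Using the noise bound (iii) and $\alpha \sim \delta^2$, this forces the data misfit to be $O(\delta)$, and a triangle inequality with (iii) then gives $\|F_1(q_\alpha^\delta) - h_r\| \lesssim \delta$. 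Retaining instead the regularization term and discarding the (nonnegative) residual gives $\|(q_\alpha^\delta)'\|_{L^2(\Omega)}^2 \lesssim \delta^2/\alpha + \|q_r'\|_{L^2(\Omega)}^2$; since $\delta^2/\alpha$ stays bounded this is a uniform $H^1$ bound, and combined with $\mathrm{supp}(q_\alpha^\delta) \subset \Omega' \Subset \Omega$ (Poincar\'e) and $H^1(\Omega) \hookrightarrow L^\infty(\Omega)$ it furnishes the uniform $L^\infty$ control of $q_\alpha^\delta$.

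For the hard step I would set $w = u_r - u_\alpha^\delta$, which has vanishing exterior data (the excitation $f$ is common) and solves $((-\Delta)^s + q_r)\,w = (q_\alpha^\delta - q_r)\,u_\alpha^\delta$ in $\Omega$, so that its exterior measurement is exactly $(-\Delta)^s w|_{W} = h_r - F_1(q_\alpha^\delta)$, which the soft step has shown to be $O(\delta)$. Testing an Alessandrini-type integral identity against Runge-approximating solutions and invoking the quantitative unique continuation / propagation of smallness for $(-\Delta)^s$ — the mechanism that produces the logarithmic loss — would bound $\|q_\alpha^\delta - q_r\|$ in a negative Sobolev norm by $\omega$ of the data error; interpolating this against the uniform $H^1$ bound from the soft step then upgrades the estimate to $\|q_\alpha^\delta - q_r\|_{L^\infty(\Omega)} = \mathcal{O}(\omega(\delta))$ with $\omega(\delta) = C_1|\log(C_1\delta)|^{-\nu}$.

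I expect the main obstacle to be the single-measurement logarithmic stability itself: the estimate of \cite{RulSala} is formulated for the full exterior DtN map, so adapting it to one fixed excitation $f$ requires the nondegeneracy encoded in hypothesis (i) — the ratio bound $\|f\|_{H^s(W_1)}/\|f\|_{L^2(W_1)} \le C$ together with $f \in \tilde H^{s+\epsilon}(W_1)$ keeps $u_r$ from degenerating and keeps the constants uniform — and careful handling of the product $(q_\alpha^\delta - q_r)\,u_\alpha^\delta$ in fractional Sobolev spaces. The restriction $s \in [\tfrac14,\tfrac12]$ should arise precisely from matching the regularity of this product to the mapping properties demanded by the unique-continuation estimate. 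A final delicate point is the norm bookkeeping: one must reconcile the $L^2(W_2)$ residual used in $J_1$ with the $H^{-s}(W_1)$ noise model of hypothesis (iii), exploiting $\|\cdot\|_{H^{-s}} \lesssim \|\cdot\|_{L^2}$ to pass the soft-step fidelity into the weaker norm on which the fractional stability estimate operates.
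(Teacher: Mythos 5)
Your soft variational step is essentially the paper's own: take $q=q_r$ in the near-minimality hypothesis, split the resulting inequality to obtain both the $\mathcal{O}(\delta)$ data misfit $\|F_1(q_\alpha^\delta)-h^\delta\|_{L^2(W_1)}$ and the uniform bound on $\|(q_\alpha^\delta)'\|_{L^2(\Omega)}$ under $C_2\delta^2\leq\alpha\leq C_3\delta^2$, then pass to $H^{-s}(W_1)$ via $\|\cdot\|_{H^{-s}(W_1)}\lesssim\|\cdot\|_{L^2(W_1)}$. That part is correct (modulo a bookkeeping wrinkle the paper shares: bounding $\|h_r-h^\delta\|_{L^2(W_1)}$ requires the section's standing $L^2$ noise assumption, since hypothesis (iii) in the weaker $H^{-s}$ norm runs in the wrong direction for this step).

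The genuine gap is your hard step. You propose to derive single-measurement logarithmic stability yourself by adapting the full-DtN-map result of \cite{RulSala} through an Alessandrini-type identity tested against Runge-approximating solutions, and you yourself flag this adaptation as the main unresolved obstacle. That is exactly the part that cannot be improvised: with one fixed excitation $f$ you cannot choose both factors in the Alessandrini identity freely — one factor is pinned to the unknown solution $u_r$ — so Runge approximation only controls the product $(q_r-q_\alpha^\delta)u_r$ in a weak norm, and you must then deal with the nodal set of $u_r$ (quantitative lower bounds on $|u_r|$, which is where the nondegeneracy ratio in hypothesis (i) actually enters). This is the substance of an entire separate paper. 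The published proof sidesteps all of it by invoking R\"uland's single-measurement stability theorem \cite[Theorem 1.]{RulJMA}, which already states $\|q_1-q_2\|_{L^{\infty}(\Omega)}\leq\omega\left(\|F_1(q_1)-F_1(q_2)\|_{H^{-s}(W_1)}\right)$ with $\omega(t)=C_1|\log(C_1t)|^{-\nu}$ under hypothesis (i), the support condition $\mathrm{supp}(q)\subset\Omega'\Subset\Omega$, and a $C^{0,s}(\overline{\Omega})$ a priori bound; the remaining work is only hypothesis-checking, namely $H^1(\Omega)\hookrightarrow C^{0,s}(\overline{\Omega})$ in one dimension (which forces $s\leq\frac12$) together with the requirement $s\geq\frac14$ from \cite{GhoshJFA}. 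This also corrects your conjecture about the restriction $s\in[\frac14,\frac12]$: it comes from this Sobolev embedding of the a priori class plus the uniqueness threshold, not from product-regularity in a unique continuation estimate. So either you cite \cite{RulJMA} and your argument closes along the paper's lines, or your proposal owes a complete proof of single-measurement stability that it does not contain.
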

\begin{proof}
The main idea of the proof could be found in \cite[Theorem 2.1.]{CheYam}, and here we will provide more details for better understanding. Despite the stability estimate given in \cite[Theorem 1.]{RulJMA} as
\begin{equation}\label{Rulstab}
\|q_{\alpha}^{\delta}-q_0\|_{L^{\infty}(\Omega)}\leq \omega(\|F_1(q_{\alpha}^{\delta})-F_1(q_0)\|_{H^{-s}(W_1)})
\end{equation}
requiring $q_r,q_{\alpha}^{\delta}\in C^{0,s}(\overline{\Omega})$ with $\mathrm{supp}(q_r),\mathrm{supp}(q_{\alpha}^{\delta})\subset\Omega'\Subset\Omega$, when $s\in(0,\frac{1}{2}]$, it also holds for $H^1(\Omega)$ since $H^1(\Omega)\hookrightarrow C^{0,s}(\overline{\Omega})$ by Sobolev embedding theorem. By \cite[Corollary 6.31.]{Adams}, and \cite[Theorem 3.30., Theorem 3.33.]{McLean_2000}, we can obtain the equivalence of $H^1$ norm and $H^1$ semi-norm $\|(\cdot)'\|_{L^2(\Omega)}$. According to \cite[Theorem 1.]{GhoshJFA}, if $q\in L^{\infty}(\Omega)$, $s\in[\frac{1}{4},1)$, then the potential $q$ is uniquely determined, so we restrict $s\in[\frac{1}{4},\frac{1}{2}]$. Set $\mathcal{M}=\|q_r'\|_{L^2(\Omega)}$. Since $J_1(q_{\alpha}^{\delta})\leq J_1(q)+\delta^2$ for $q\in H^1(\Omega)$, we obtain
\begin{equation*}
\begin{aligned}
\|F_1(q_{\alpha}^{\delta})-h^{\delta}\|^2_{L^2(W_1)}+\frac{\alpha}{2}\|(q_{\alpha}^{\delta})'\|^2_{L^2(\Omega)}&\leq \|F_1(q_r)-h^{\delta}\|^2_{L^2(W_1)}+\frac{\alpha}{2}\|q_r'\|^2_{L^2(\Omega)}+\delta^2\\
&=\|h_r-h^{\delta}\|^2_{L^2(W_1)}+\frac{\alpha}{2}\|q_r'\|^2_{L^2(\Omega)}+\delta^2\\
&\leq 2\delta^2+\frac{\alpha}{2}\mathcal{M}^2.
\end{aligned}
\end{equation*}
Hence
\begin{equation*}
\begin{aligned}
\|F_1(q_{\alpha}^{\delta})-h^{\delta}\|_{L^2(W_1)}&\leq\left(2\delta^2+\frac{\alpha}{2}\mathcal{M}^2\right)^{\frac{1}{2}},\\
\|(q_{\alpha}^{\delta})'\|_{L^2(\Omega)}&\leq\left(\frac{4\delta^2}{\alpha}+\mathcal{M}^2\right)^{\frac{1}{2}}.
\end{aligned}
\end{equation*}
When $\alpha\sim\delta^2$, choose $C_2\delta^2\leq\alpha\leq C_3\delta^2$, and then we get
\begin{equation*}
\begin{aligned}
\|F_1(q_{\alpha}^{\delta})-h^{\delta}\|_{L^2(W_1)}&\leq(2+\frac{C_3}{2}\mathcal{M}^2)^{\frac{1}{2}}\delta,\\
\|(q_{\alpha}^{\delta})'\|_{L^2(\Omega)}&\leq(4C_2^{-1}+\mathcal{M}^2)^{\frac{1}{2}}.
\end{aligned}
\end{equation*}
Therefore
\begin{equation*}
\begin{aligned}
\|F_1(q_{\alpha}^{\delta})-F_1(q_r)\|_{L^2(W_1)}&\leq \|F_1(q_{\alpha}^{\delta})-h^{\delta}\|_{L^2(W_1)}+\|h^{\delta}-F_1(q_r)\|_{L^2(W_1)}\\&\leq((2+\frac{C_3}{2}\mathcal{M}^2)^{\frac{1}{2}}+1)\delta,
\end{aligned}
\end{equation*}
and by the properties of Sobolev spaces \cite{McLean_2000}, it can be verified that $L^2(W_1)$ embeds to $H^{-s}(W_1)$ and
\begin{equation*}
\|F_1(q_{\alpha}^{\delta})-F_1(q_r)\|_{H^{-s}(W_1)}\leq C_4((2+\frac{C_3}{2}\mathcal{M}^2)^{\frac{1}{2}}+1)\delta,
\end{equation*}
for a positive constant $C_4$. By \eqref{Rulstab},
\begin{equation*}
\|q_{\alpha}^{\delta}-q_0\|_{L^{\infty}(\Omega)}\leq \omega(\|F_1(q_{\alpha}^{\delta})-F(q_0)\|_{H^{-s}(W_1)})\leq\mathcal{O}(\omega(\delta))
\end{equation*}
holds.
\end{proof}
\begin{remark}
The result of Theorem \ref{stabt1} is based on accurate information of the noise bound $\delta$ or a good prediction of it. Actually, several 
strategies can be employed to improve the robustness, e.g., extracting the information from big data by local average, carrying on multiple repeated observations, and preprocessing noise by certain adjoint embedding operators, see \cite{CheZha,ZhoLi,LiHub}.
\end{remark}
In order to solve the variational problem \eqref{var_pro}, we utilize conjugate gradient method to iteratively update $q^k$ for each step. Conjugate gradient method has been applied to various inverse problems \cite{DinZhe,SunWei,ZheDin}, and the key task of it is to deduce the gradient of $J_1$. Let $q$ be perturbed by a small amount $\delta q\in Q_1=\{q\in L^2(\Omega);\|q\|_{H^1(\Omega)}<\infty,q=0\text{ near }\partial\Omega\}$. Then the forward solution has a small change denoted by
$$u_{q+\delta q}-u_q=u_q'\delta q+\tilde{r}.$$
Denote $\BLUE{\varphi}=u_q'\delta q$, and it satisfies the following sensitive problem\\
~\\
\textbf{Sensitive Problem:}
\begin{equation}\label{sp}
\left\{
\begin{aligned}
 ((-\Delta)^s+q)\varphi&=-\delta q\cdot u_q, &~~x\in\Omega,\\
\varphi&=0, &~~x\in\Omega_e,
\end{aligned}
\right.
\end{equation}
and $\tilde{r}$ satifies
\begin{equation}\label{odeltaq}
\left\{
\begin{aligned}
 (-\Delta)^s\tilde{r}+(q+\delta q)\tilde{r}&=-\delta q\cdot \varphi, &x\in\Omega,\\
\tilde{r}&=0, &x\in\Omega_e.
\end{aligned}
\right.
\end{equation}
On the basis of \cite[Lemma 2.3.]{GhoSal}, we get the estimate
\begin{equation*}
\|\varphi\|_{H^{s}(\mathbb{R})}\leq \|\delta q\|_{H^1(\Omega)}\|u_q\|_{H^{s}(\mathbb{R})},
\end{equation*}
\begin{equation*}
\|\tilde{r}\|_{H^{s}(\mathbb{R})}\leq \|\delta q\|_{H^1(\Omega)}\| \varphi\|_{H^{s}(\mathbb{R})}\leq\|\delta q\|_{H^1(\Omega)}^2\|u_q\|_{H^{s}(\mathbb{R})}.
\end{equation*}
Thus $\tilde{r}=o(\delta q)$ is a higher order infinitesimal of $\delta q$, which can be ignored.

Following \eqref{var_pro}, we obtain
\begin{equation}\label{A1i}
\begin{aligned}
\delta J_1(q)=&J_1(q+\delta q)-J_1(q)\\
=&\int_{W_2}((-\Delta)^su(x)-h^{\delta}(x))(-\Delta)^s\varphi(x)dx+\alpha(q',\delta q')_{L^2(\Omega)}\\
&+o(||(-\Delta)^s\varphi||_{L^2(W_2)}+||\delta q||_{H^1(\Omega)}).
\end{aligned}
\end{equation}
Suppose that $v_q$ satisfies the following adjoint problem\\
~\\
\textbf{Adjoint Problem:}
\begin{equation}\label{ap}
\left\{
\begin{aligned}
 ((-\Delta)^s+q)v&=0, &x\in\Omega,\\
v&=(-\Delta)^s\left. u_q\right|_{W_2}-h^{\delta}, &x\in\Omega_e,
\end{aligned}
\right.
\end{equation}
thus
\begin{equation}
\begin{aligned}
&(-q\varphi-\delta q\cdot u_q,v_q)_{\Omega}+((-\Delta)^s\left. \varphi\right|_{W_2},(-\Delta)^s\left. u_q\right|_{W_2}-h^{\delta})_{W_2}\\
=&((-\Delta)^s\varphi,v_q)_{\mathbb{R}^n}=(\varphi,(-\Delta)^sv_q)_{\mathbb{R}^n}=(\varphi,-qv_q)_{\Omega},
\end{aligned}
\end{equation}
and
$$\int_{W_2}((-\Delta)^su(x)-h^{\delta}(x))(-\Delta)^s\varphi(x)dx=\int_{\Omega}\delta qu_q\cdot v_qdx. $$

Hence by (\ref{A1i}), it holds that
\begin{equation*}
\begin{aligned}
\delta J_1(q)=&J_1(q+\delta q)-J_1(q)\\
=&\int_{\Omega}\delta qu_q\cdot v_qdx+\alpha(q',\delta q')_{L^2(\Omega)}+o(||(-\Delta)^s\varphi||_{L^2(W_2)}+||\delta q||_{H^1(\Omega)})\\
=&(u_qv_q,\delta q)_{L^2(\Omega)}+\alpha(-q'',\delta q)_{L^2(\Omega)}+o(||(-\Delta)^s\varphi||_{L^2(W_2)}+||\delta q||_{H^1(\Omega)})\\
=&(u_qv_q,\delta q)_{L^2(\Omega)}+\alpha(-q'',\delta q)_{L^2(\Omega)}+o(||(-\Delta)^s\varphi||_{L^2(W_2)}+||\delta q||_{H^1(\Omega)})\\
=&(u_qv_q-\alpha q'',\delta q)_{L^2(\Omega)}+o(||(-\Delta)^s\varphi||_{L^2(W_2)}+||\delta q||_{H^1(\Omega)}).
\end{aligned}
\end{equation*}
Due to the assumption in \cite[Theorem 1.]{GhoshJFA} that $\overline{\Omega}\cap\overline{W_2}=\emptyset$, by \eqref{def1} and the definition of $H^s(\mathbb{R}^n)$ norm in \cite{McLean_2000}, it holds that $\|(-\Delta)^s\varphi\|_{L^2(W_2)}\leq C\|\varphi\|_{H^{s}(\mathbb{R})}$. By \cite[Lemma 2.3.]{GhoSal},
\begin{equation*}
\|(-\Delta)^s\varphi\|_{L^2(W_2)}\leq \|\delta q\|_{H^1(\Omega)}\|u_q\|_{H^{s}(\mathbb{R})},
\end{equation*}
hence $o(\|(-\Delta)^s\varphi\|_{L^2(W_2)})$ is also a higher order infinitesimal of $\delta q$.

Therefore, the gradient of $J_1(q)$ can be written by
\begin{equation}\label{grad}
J_{q}'=u_qv_q-\alpha q''.
\end{equation}

 Assume that $q^k$ is the $k$-th iteration approximate solution of $q(x)$. Then the updating formula is
$$q^{k+1}=q^k+\beta^kd^k,$$
where $\beta^k$ is the step size, and $d^k$ is the descent direction in the $k$-th iteration updated by
\begin{equation}\label{desc}
d^k=-J_{q^k}'+\gamma^kd^{k-1},d^0=-J_{q^0}',
\end{equation}
with the conjugate coefficient $\gamma^k$ calculated by
\begin{equation}\label{conj}
\gamma^k=\frac{||J_{q^k}'||_{L^2(\Omega)}^2}{||J_{q^{k-1}}'||_{L^2(\Omega)}^2}, \gamma^0=0.
\end{equation}
In order to choose an appropriate step size $\beta^k$, we compute
\begin{equation*}
\begin{aligned}
&J_1(q^k+\beta^kd^k)\\
\approx&\frac{1}{2}\int_{W_2}((-\Delta)^su_{q^k}(x)+\beta^k(-\Delta)^s\varphi^k-h^{\delta}(x))^2dx
+\frac{\alpha}{2}\int_{\Omega}((q^k)'+\beta^k(d^k)')^2dx,
\end{aligned}
\end{equation*}
where $\varphi^k$ is the solution of sensitive problem \eqref{sp}. Let
\begin{equation}
\begin{aligned}
\frac{dJ_1}{d\beta^k}\approx&\int_{W_2}((-\Delta)^su_{q^k}(x)+\beta^k(-\Delta)^s\varphi^k-h^{\delta}(x))(-\Delta)^s\varphi^kdx\\
&+\alpha((q^k)'+\beta^k(d^k)',(d^k)')_{L^2(\Omega)}=0.
\end{aligned}
\end{equation}
Then the step size $\beta^k$ is given by
\begin{equation}\label{ss}
\beta^k=\frac{\int_{W_2}((-\Delta)^su_{q^k}(x)-h^{\delta}(x))(-\Delta)^s\varphi^kdx+\alpha((q^k)',(d^k)')_{L^2(\Omega)}} {\int_{W_2}((-\Delta)^s\varphi^k)^2dx+\alpha((d^k)',(d^k)')_{L^2(\Omega)}}.
\end{equation}
The iteration steps of the conjugate gradient method are given by
\begin{enumerate}
\item Initialize $q^0=0$, and set $k=0$, $d^0=-J_{q^0}'$;
\item Solve the forward problem (\ref{fsee2}), where we set $q=q^k$, and denote the residual $(-\Delta)^s\left. u\right|_{W_2}-h^{\delta}$;
\item Solve the adjoint problem (\ref{ap}), and determine the gradient $J_{q^k}'$ by \eqref{grad};
\item Calculate the conjugate coefficient $\gamma^k$ by \eqref{conj}, and the descent direction $d^k$ by \eqref{desc};
\item Solve the sensitive problem (\ref{sp}) and obtain $\varphi^k$, where we take $\delta q=d^k$;
\item Calculate the step size $\beta^k$ by \eqref{ss};
\item Update the zero order term $q^k$ by formula $q^{k+1}=q^k+\beta^kd^k$;
\item Increase $k=k+1$, return to Step 2, and repeat the above procedures until a stopping criterion is satisfied.
\end{enumerate}

\subsection{Two dimension inversion}
In this subsection, we assume that 
\begin{enumerate}[(i)]
\item Domain $\Omega\subset\mathbb{R}^2$ is a bounded open set with $C^{1,1}$ boundary, $W_1,W_2\subset\Omega_e$ are open sets, and $\overline{\Omega}\cap \overline{W_1},\overline{\Omega}\cap\overline{W_2}=\emptyset$;
\item Functions $q$, $u$, $f$ satisfy the equation \eqref{fseghosh}, $q$ and $\Delta q$ are zero near $\partial\Omega$, and $f\in C_c^{\infty}(\Omega_e)\setminus\{0\}$, $\mathrm{supp}(f)\subset W_1$;
\item The observation $h^{\delta}$ satisfies
$$\|h^{\delta}(x)-(-\Delta)^su(x)\|_{L^2(W_2)}\leq\delta.$$
\end{enumerate}
In two-dimensional case, if $\Omega\subset\mathbb{R}$ satisfies strong local Lipschitz condition, then the embedding property $H^2(\Omega)\hookrightarrow L^{\infty}(\Omega)$ holds. Thus, the forward operator changes into
\begin{equation*}
\begin{aligned}
F_2:H^2(\Omega)&\to L^2(W_2),\\ q&\mapsto\left. (-\Delta)^su\right|_{W_2},
\end{aligned}
\end{equation*}
and the Tikhonov regularization functional turns to
\begin{equation}\label{var_pro2d}
J_2(q)=\frac{1}{2}||F_2(q)-h^{\delta}||_{L^2(W_2)}^2+\frac{\alpha}{2}||\Delta q||^2_{L^2(\Omega)}.
\end{equation}
In \eqref{var_pro2d}, the first part is used to control the value of $F_2(q)$ to be close to the measurement $h^{\delta}$, and the second part is used to stabilize the second-order derivative of the potential $q$. We aim to solve the minimization problem
\begin{equation}\label{vp_min2d}
J_2(q_{\alpha}^{\delta})=\min_{q\in H^2(\Omega)}J_2(q).
\end{equation}
The the convergence estimate can be proposed similarly as:
\begin{theorem}\label{stabt2}
Let $\Omega\subset\mathbb{R}^2$ be bounded open set with $C^{1,1}$ boundary, and $W_1$ be open set such that $\overline{\Omega}\cap\overline{W_1}=\emptyset$. Assume that $s\in[\frac{1}{4},1)$, and that $f$, $q_r$ are the boundary term and potential term of \eqref{fseghosh} respectively. Suppose that the subsequent conditions are satisfied,
\begin{enumerate}[(i)]
\item For $\epsilon>0$, $f$ is chosen by $f\in \tilde{H}^{s+\epsilon}(W_1)\setminus\{0\}$, and
\begin{equation*}
\frac{\|f\|_{H^s(W_1)}}{\|f\|_{L^2(W_1)}}\leq C
\end{equation*}
for $C>0$;
\item Let the real potential $q_r=0$ near $\partial\Omega$,
\begin{equation*}
F(q_r)=h_r:=\left. (-\Delta)^su_r\right|_{W_2},~~q_r\in H^2(\Omega),
\end{equation*}
where $u_r$ is the real solution of \eqref{fseghosh} with real potential $q_r$; Let $q_{\alpha}^{\delta}\in H^2(\Omega)$ satisfy
\begin{equation*}
J_2(q_{\alpha}^{\delta})\leq \inf_{q\in H^2(\Omega)}J_2(q)+\delta^2,
\end{equation*}
and we assume that $\mathrm{supp}(q_r),\mathrm{supp}(q_{\alpha}^{\delta})\subset\Omega'\Subset\Omega$; 
\item Assume that the observation satisfies
\begin{equation*}
\|h^{\delta}-h_r\|_{H^{-s}(W_2)}\leq\delta.
\end{equation*}
\end{enumerate}
 Let $\alpha>0$ be such that $\alpha\sim\delta^2$ when $\delta\downarrow0$.
Then
\begin{equation*}
\|q_{\alpha}^{\delta}-q_r\|_{L^{\infty}(\Omega)}=\mathcal{O}(\omega(\delta)),
\end{equation*}
as $\delta\downarrow0$, where
$$\omega(\delta)=C_1|\log(C_1\delta)|^{-\nu},$$
and $\nu>0$, $C_1>1$ only depend on $\Omega,W_1,s,C,n,\|f\|_{H^{s+\epsilon}(W_1)},\|q_{\alpha}^{\delta}\|_{H^2(\Omega)}, \|q_r\|_{H^2(\Omega)}$.
\end{theorem}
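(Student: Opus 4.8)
The plan is to mirror the argument of Theorem \ref{stabt1}, replacing the one-dimensional ingredients by their two-dimensional counterparts adapted to the $H^2$ penalty. The backbone is again the conditional stability estimate \eqref{Rulstab} from \cite[Theorem 1.]{RulJMA}, which bounds $\|q_{\alpha}^{\delta}-q_r\|_{L^{\infty}(\Omega)}$ by $\omega(\|F_2(q_{\alpha}^{\delta})-F_2(q_r)\|_{H^{-s}(W_2)})$, provided both potentials lie in $C^{0,s}(\overline{\Omega})$ with supports in $\Omega'\Subset\Omega$. So the first step is to verify the $C^{0,s}$ requirement. In two dimensions the Sobolev embedding gives $H^2(\Omega)\hookrightarrow C^{0,\gamma}(\overline{\Omega})$ for every $\gamma\in(0,1)$; since $s\in[\frac14,1)$ we have $s<1=2-n/2$, so $H^2(\Omega)\hookrightarrow C^{0,s}(\overline{\Omega})$ and \eqref{Rulstab} applies on the whole admissible range of $s$. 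This is precisely what the second-order regularization buys compared with the one-dimensional setting, where the $H^1$ embedding forced $s\leq\frac12$.

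Next I would establish the equivalence of the $H^2(\Omega)$ norm with the seminorm $\|\Delta q\|_{L^2(\Omega)}$ on the relevant class of potentials. Because condition (ii) forces $q_r$ and $q_{\alpha}^{\delta}$, together with their Laplacians, to vanish near $\partial\Omega$, these functions lie in $H^2(\Omega)\cap H^1_0(\Omega)$, and the $C^{1,1}$ regularity of $\partial\Omega$ is exactly the hypothesis under which the elliptic a priori estimate $\|q\|_{H^2(\Omega)}\leq C\|\Delta q\|_{L^2(\Omega)}$ holds; see \cite{McLean_2000,Adams}. This replaces the elementary Poincar\'e-type equivalence of $H^1$ norm and $H^1$ seminorm used for Theorem \ref{stabt1}, and is where the boundary smoothness and the structure of the second-order penalty genuinely enter.

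With these two facts in hand the remainder is a verbatim repetition of the energy-comparison computation. Setting $\mathcal{M}=\|\Delta q_r\|_{L^2(\Omega)}$ and using $J_2(q_{\alpha}^{\delta})\leq J_2(q_r)+\delta^2$ together with the observation bound $\|h_r-h^{\delta}\|_{L^2(W_2)}\leq\delta$ yields
\begin{equation*}
\|F_2(q_{\alpha}^{\delta})-h^{\delta}\|_{L^2(W_2)}^2+\frac{\alpha}{2}\|\Delta q_{\alpha}^{\delta}\|_{L^2(\Omega)}^2\leq 2\delta^2+\frac{\alpha}{2}\mathcal{M}^2,
\end{equation*}
so that, under the calibration $C_2\delta^2\leq\alpha\leq C_3\delta^2$, one has $\|F_2(q_{\alpha}^{\delta})-h^{\delta}\|_{L^2(W_2)}=\mathcal{O}(\delta)$ while $\|\Delta q_{\alpha}^{\delta}\|_{L^2(\Omega)}$ stays bounded. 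A triangle inequality then gives $\|F_2(q_{\alpha}^{\delta})-F_2(q_r)\|_{L^2(W_2)}=\mathcal{O}(\delta)$, and the continuous embedding $L^2(W_2)\hookrightarrow H^{-s}(W_2)$ upgrades this to an $H^{-s}(W_2)$ bound of the same order. Feeding the result into \eqref{Rulstab} and using the monotonicity of $\omega$ delivers $\|q_{\alpha}^{\delta}-q_r\|_{L^{\infty}(\Omega)}=\mathcal{O}(\omega(\delta))$.

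I expect the main obstacle to be the second step, securing the norm equivalence $\|q\|_{H^2(\Omega)}\sim\|\Delta q\|_{L^2(\Omega)}$. Unlike the one-dimensional case, where it follows from a Poincar\'e inequality, here it rests on full second-order elliptic regularity for the Dirichlet Laplacian, which is why the $C^{1,1}$ boundary hypothesis is imposed; some care is needed to confirm that the vanishing-near-boundary conditions on $q$ and $\Delta q$ place the iterates in the correct function-space class for that estimate to be invoked, and that the supports stay in a fixed $\Omega'\Subset\Omega$ uniformly so that \eqref{Rulstab} remains applicable along the whole family.
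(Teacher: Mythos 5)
Your proposal is correct and follows essentially the same route as the paper's proof: invoke the single-measurement stability estimate of \cite[Theorem 1.]{RulJMA}, verify $q_r,q_{\alpha}^{\delta}\in C^{0,s}(\overline{\Omega})$ via the two-dimensional embedding $H^2(\Omega)\hookrightarrow C^{0,s}(\overline{\Omega})$ for all $s\in(0,1)$, establish the equivalence of the $H^2$ norm with $\|\Delta(\cdot)\|_{L^2(\Omega)}$ (the paper cites \cite[Lemma 9.17.]{GilTru} together with \cite{McLean_2000} for this elliptic estimate, exactly the step you flag as the main obstacle), and then repeat the energy-comparison argument of Theorem \ref{stabt1} with $\mathcal{M}=\|\Delta q_r\|_{L^2(\Omega)}$. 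The paper merely states these ingredients and defers the computation to Theorem \ref{stabt1}, so your write-up is a faithful, slightly more detailed version of the same proof.
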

\begin{proof}
According to stability estimate given in \cite[Theorem 1.]{RulJMA} as
\begin{equation}\label{Rulstab2d}
\|q_{\alpha}^{\delta}-q_0\|_{L^{\infty}(\Omega)}\leq \omega(\|F_2(q_{\alpha}^{\delta})-F_2(q_0)\|_{H^{-s}(W_2)})
\end{equation}
it requires $q_r,q_{\alpha}^{\delta}\in C^{0,s}(\overline{\Omega})$ with $\mathrm{supp}(q_r),\mathrm{supp}(q_{\alpha}^{\delta})\subset\Omega'\Subset\Omega$. When $s\in(0,1)$, it also holds for $H^2(\Omega)\hookrightarrow C^{0,s}(\overline{\Omega})$ by Sobolev embedding theorem. The equivalence $H^2$ norm and $\|\Delta(\cdot)\|_{L^2(\Omega)}$ can be deduced through \cite[Lemma 9.17.]{GilTru} and \cite[Theorem 3.30., Theorem 3.33.]{McLean_2000}. Combining \cite[Theorem 1.]{GhoshJFA}, we restrict $s\in[\frac{1}{4},1)$. The remaining proof is analogous to the counterpart in Theorem \ref{stabt1}.
\end{proof}

The admissible set in this subsection is given by
$$Q_2=\{q\in L^2(\Omega);\|q\|_{H^2(\Omega)}<\infty,q\text{ and }\Delta q\text{ are zero near }\partial\Omega\}.$$
Let $q$ be perturbed by a small amount $\delta q\in Q_2$. The forward solution has a small change denoted by
$$u_{q+\delta q}-u_q=u_q'\delta q+\tilde{r}.$$
Denote $\varphi=u_q'\delta q$, it also satisfies the two-dimensional case of \eqref{sp}. Similar to the derivation of one-dimensional case, the gradient of $J_2(q)$ can be given by
\begin{equation}\label{grad2d}
J_{q}'=u_qv_q+\alpha \Delta^2q,
\end{equation}
where $u_q$, $v_q$ are the solution of the equation \eqref{fseghosh} and \eqref{ap} in two-dimensional case respectively.

Next, we use conjugate gradient method to solve variational problem \eqref{var_pro2d}. Assume that the updating formula is
$$q^{k+1}=q^k+\beta^kd^k,$$
with the step size $\beta^k$, and the descent direction $d^k$ updated by
\begin{equation}\label{desc2d}
d^k=-J_{q^k}'+\gamma^kd^{k-1},d^0=-J_{q^0}',
\end{equation}
where the conjugate coefficient $\gamma^k$ is calculated by
\begin{equation}\label{conj2d}
\gamma^k=\frac{||J_{q^k}'||_{L^2(\Omega)}^2}{||J_{q^{k-1}}'||_{L^2(\Omega)}^2}, \gamma^0=0.
\end{equation}
The step size is estimated as
\begin{equation}\label{ss2d}
\beta^k=\frac{\int_{W_2}((-\Delta)^su_{q^k}(x)-h^{\delta}(x))(-\Delta)^s\varphi^kdx+\alpha(\Delta q^k,\Delta d^k)_{L^2(\Omega)}} {\int_{W_2}((-\Delta)^s\varphi^k)^2dx+\alpha(\Delta d^k,\Delta d^k)_{L^2(\Omega)}}
\end{equation}
to make $\frac{dJ_2}{d\beta_k}\approx0$.
The iteration steps of the conjugate gradient method are basically similar as them in one-dimensional case:
\begin{enumerate}
\item Initialize $q^0=0$, and set $k=0$, $d^0=-J_{q^0}'$;
\item Solve the forward problem (\ref{fse2d}), where we set $q=q^k$, and denote the residual $(-\Delta)^s\left. u\right|_{W_2}-h^{\delta}$;
\item Solve the adjoint problem (\ref{ap}), and determine the gradient $J_{q^k}'$ by \eqref{grad2d};
\item Calculate the conjugate coefficient $\gamma^k$ by \eqref{conj2d}, and the descent direction $d^k$ by \eqref{desc2d};
\item Solve the sensitive problem (\ref{sp}) and obtain $\varphi^k$, where we take $\delta q=d^k$;
\item Calculate the step size $\beta^k$ by \eqref{ss2d};
\item Update the zero order term $q^k$ by formula $q^{k+1}=q^k+\beta^kd^k$;
\item Increase $k=k+1$, return to Step 2, and repeat the above procedures until a stopping criterion is satisfied.
\end{enumerate}

\section{\bf Numerical inversions}
In this section, we present numerical results for 1D and 2D fractional Calder\'on problem. The observation is assumed as
\begin{equation*}
h^{\delta}=\left. (-\Delta)^su\right|_{W_2}+\delta (2rand(size(h))-1),
\end{equation*}
where $rand$ generates random numbers uniformly distributed on $[0,1]$, $size(h)$ is the number of discrete points in $W_2$, and $\delta$ is the noise level of the observation.

The stopping rule in the iteration algorithm is given as
\begin{equation*}
E_{k+1}=\|h^{\delta}-(-\Delta)^su_{k+1}\|_{L^2(W_2)}^2\leq 2\times\delta^2
\end{equation*}
in 1D problem, and
\begin{equation*}
E_{k+1}=\|h^{\delta}-(-\Delta)^su_{k+1}\|_{L^2(W_2)}^2\leq 10\times\delta^2
\end{equation*}
in 2D problem. In the light of Theorem \ref{stabt1} and Theorem \ref{stabt2}, we choose the regularization parameters that follow $\alpha\sim\delta^2$. When it reaches the stopping rule, we denote reconstruction solution $q_{\alpha}^{\delta}$. In the following examples, we use high-precision grids to calculate the observation by methods in Section \ref{ss2_2}. 

{\bf Example 4.1.}
In this example, the potential $q$ is given by $q(x) = \sin(\pi x)$, and we assume that $\Omega=(-1,1),s=0.4$, and $f(x)$ is $\mathbf{1}_{W_1}$ polished function. Let observation set $W_2=(-3,-1-\epsilon)\cup(1+\epsilon,3)$, $0<\epsilon\ll1$, $W_1=W_2$. The discrete matrix of the operator $(-\Delta)^s+q$ for this example is positive definite after our test. Figure \ref{q1de1}(a) presents the reconstruction results of $q_{\alpha}^{\delta}$ with noise level $\delta=${ 1E-7, 1E-5, 1E-3} compared to the real potential $q$, where $\alpha$ is given by $\alpha=\delta^2$. It illustrates that the reconstruction performs well for lower noise level $\delta$, however, the quality of the results deteriorates rapidly for larger noise, which reflects the ill-posedness of fractional Calder\'on problem. In order to show the stability of our algorithm, we choose noise level $\delta$=1E-7, 1E-6, 1E-5, 1E-4, 1E-3 and regularization parameter $\alpha=\delta^2$ to plot the relationship between $|log(\delta)|^{-1}$ and inversion error $\|q_{\alpha}^{\delta}-q\|_{L^{\infty}(\Omega)}$ in Figure \ref{q1de1}(b), from which we see that it is close to proportional relationship and validates the logarithmic stability result Theorem \ref{stabt1}.
\begin{figure}[htbp] 
\begin{minipage}[b]{.48\linewidth}
  \centering
  \centerline{\includegraphics[width=6.0cm]{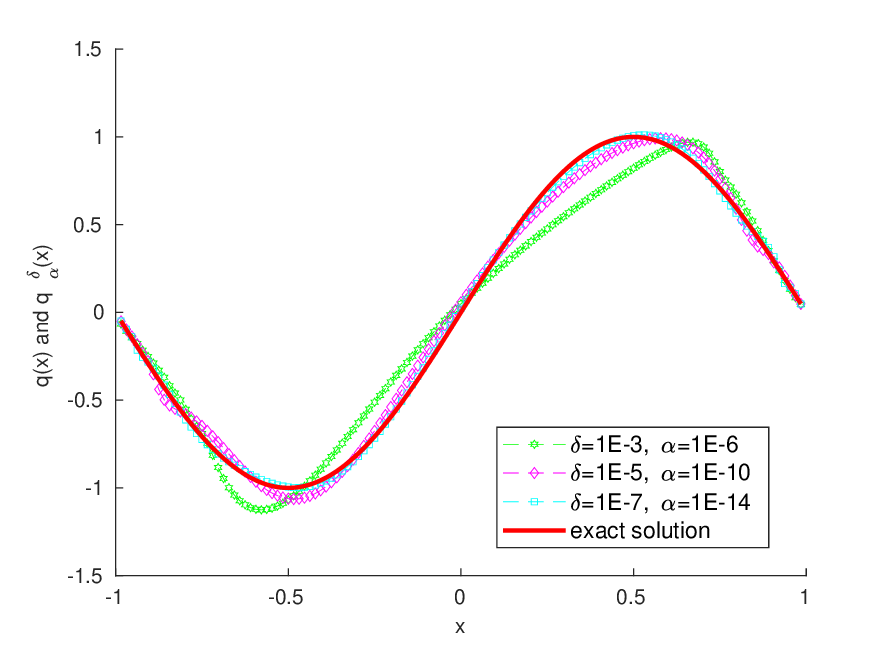}}
%  \vspace{1.5cm}
  \centerline{(a)}\medskip
\end{minipage}
\hfill
\begin{minipage}[b]{0.48\linewidth}
  \centering
  \centerline{\includegraphics[width=6.0cm]{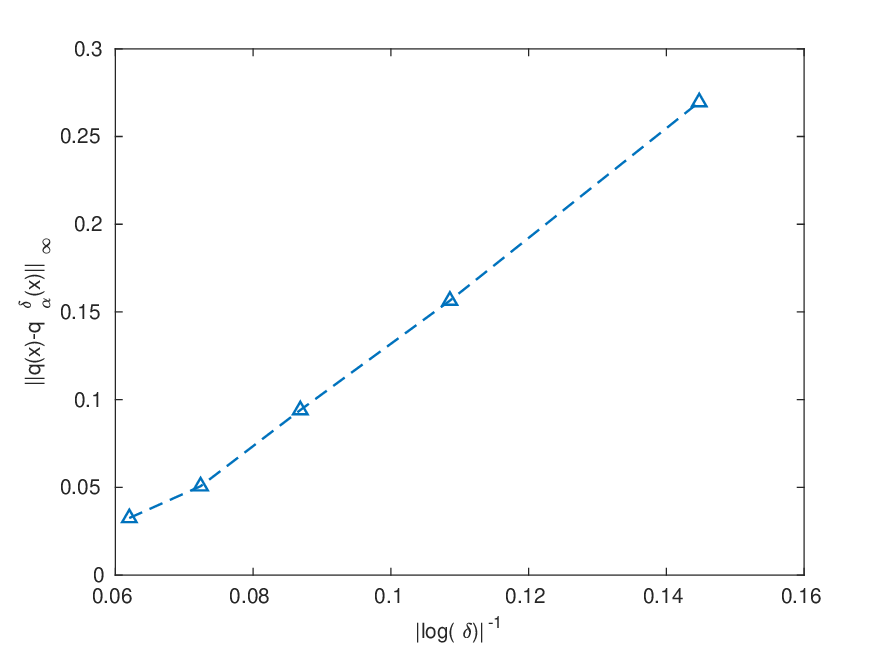}}
%  \vspace{1.5cm}
  \centerline{(b)}
  \medskip
\end{minipage}
\caption{The numerical results for Example 4.1 with different noise level $\delta$, (a) the reconstruction results $q_{\alpha}^{\delta}(x)$ and real solution $q(x)$; (b) the relationship between $|log(\delta)|^{-1}$ and $\|q_{\alpha}^{\delta}-q\|_{L^{\infty}(\Omega)}$.}
\label{q1de1}
\end{figure}

{\bf Example 4.2.}
In this example, $q(x)$ is prescribed as $q(x) = 10(0. 75-x)^2_+$. Other parameters such as $\Omega,s,f(x),W_1,W_2$ are the same as Example 4.1, and $\alpha$ is given by $\alpha=\delta^2$. The inversion potential $q_{\alpha}^{\delta}$ with noise level $\delta=${\rm 1E-7,1E-6,1E-5} and $\alpha=\delta^2$ is shown in Figure \ref{q1de2}(a) along with the real potential $q$. We observe that the inversion result is related to the noise level. When the noise level increases, the inversion results quickly get worse. Figure \ref{q1de2}(b) shows the relationship between $|log(\delta)|^{-1}$ and reconstruction error $\|q_{\alpha}^{\delta}-q\|_{L^{\infty}(\Omega)}$ with $\delta=$ 1E-7, 1E-6, 1E-5, 1E-4 and $\alpha=\delta^2$, which validates the logarithmic stability in Theorem \ref{stabt1}. As a result, the severe ill-posedness of the reconstruction can be further verified.
\begin{figure}[htbp]
\begin{minipage}[b]{.48\linewidth}
  \centering
  \centerline{\includegraphics[width=6.0cm]{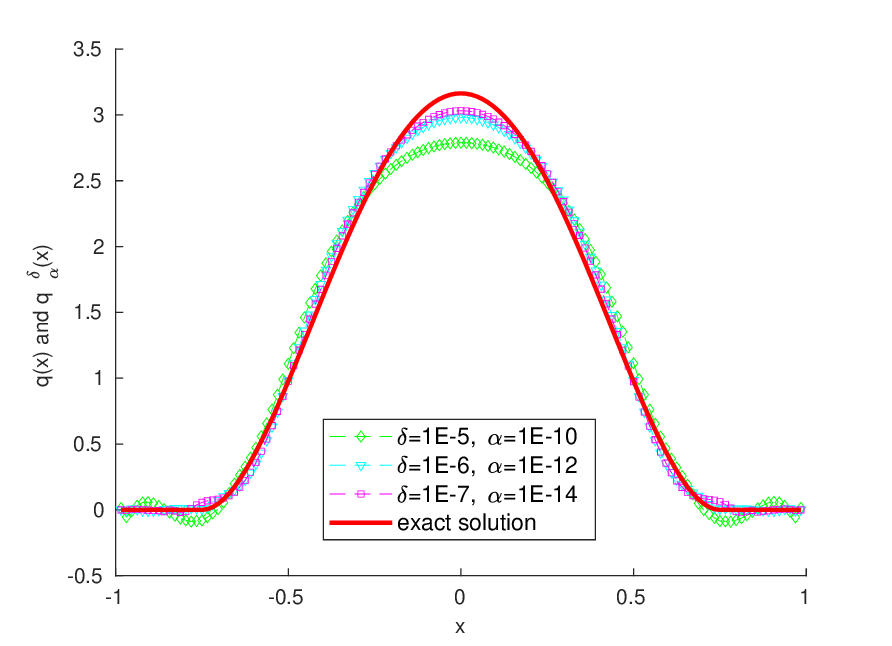}}

  \centerline{(a)}\medskip
\end{minipage}
\hfill             
\begin{minipage}[b]{0.48\linewidth}
  \centering
  \centerline{\includegraphics[width=6.0cm]{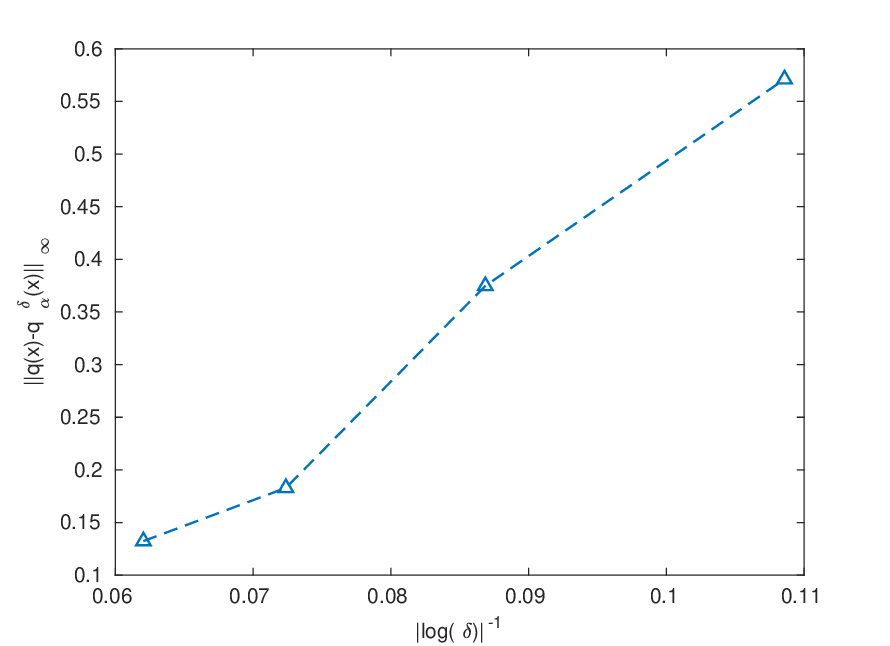}}

  \centerline{(b)}
  \medskip
\end{minipage}
\caption{The numerical results for Example 4.2 with different noise level $\delta$, (a) the reconstruction results $q_{\alpha}^{\delta}(x)$ and real solution $q(x)$; (b) the relationship between $|log(\delta)|^{-1}$ and $\|q_{\alpha}^{\delta}-q\|_{L^{\infty}(\Omega)}$.}
\label{q1de2}
\end{figure}

{\bf Example 4.3.}
Noticing that the uniqueness holds true for $q\in L^{\infty}(\Omega)$ in \cite{GhoshJFA}, in this example, we consider reconstructing a less regular piecewise constant $q(x)$ given as
\begin{equation*}
\left\{\begin{aligned}&1,\quad x\in(-\frac{1}{2},\frac{1}{2}),\\
&0,\quad x\in(-1,-\frac{1}{2}]\cup[\frac{1}{2},1).
\end{aligned}\right.
\end{equation*}
Other parameters $\Omega,s,f(x),W_1,W_2$ are the same as Example 4.1. Figure \ref{q1d1e7e3} presents the numerical result of estimated potential with noise level $\delta=${\rm 1E-7} and regularization parameter $\alpha=${\rm 1E-14} along with the real potential using our algorithm. The error is larger compared to the previous two examples because of the smoothing nature of the prior. As a result,
different regularization methods and prior information are needed when dealing with discontinuous and nonsmooth potential, for example, TV regularization methods\cite{MueSil}, and we will not go into details here.
\begin{figure}[htbp] 
  \centering
  \centerline{\includegraphics[width=6.0cm]{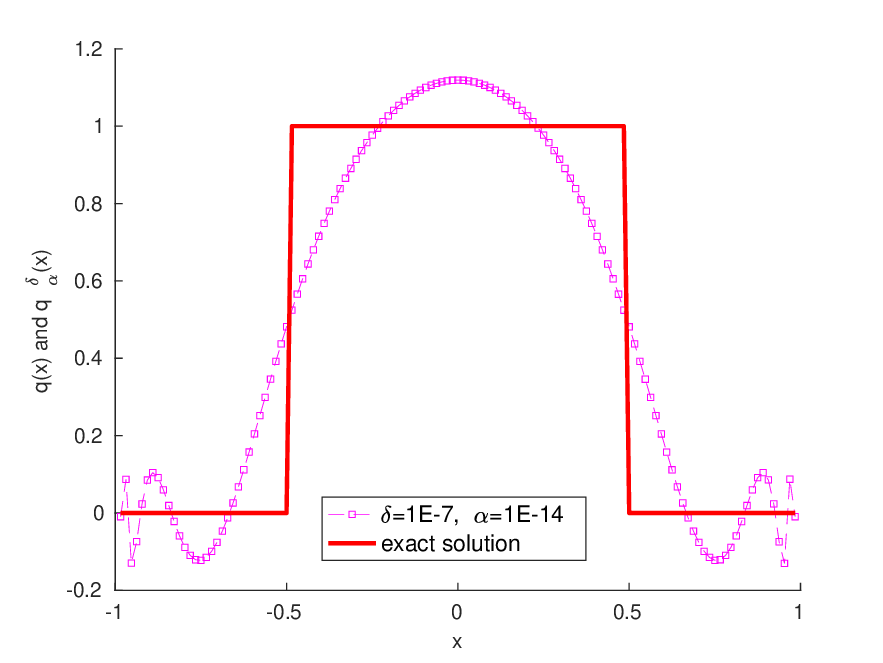}}
  \medskip
\caption{The numerical results of estimated potential for Example 4.3 with noise level $\delta={\rm 1E-7}$ and regularization parameter $\alpha={\rm 1E-14}$ along with the real potential $q(x)$.}
\label{q1d1e7e3}
\end{figure}

{\bf Example 4.4.}
This experiment tests the two-dimensional inversion. Let $q(x,y) = 100\times\max((0. 75^2-x^2)^3(0. 75^2-y^2)^3,0),\Omega=(-1,1)^2,s=0.5$, $f(x)$ is $\mathbf{1}_{W_1}$ polished function, the set $W_2=(-3,3)^2\setminus(-1-\epsilon,1+\epsilon)^2$, $0<\epsilon\ll1$ , and $W_1=W_2$. Figure \ref{q2de1}(a) and Figure\ref{q2de1}(b) show the real potential $q(x,y)$ and the reconstructed result $q_{\alpha}^{\delta}(x,y)$ with $\delta=${\rm 1E-6},$\alpha=${\rm 1E-13} respectively. The absolute error $|q_{\alpha}^{\delta}(x,y)-q(x,y)|$ is presented in Figure \ref{q2de1}(c) with $\delta=${\rm 1E-6},$\alpha=${\rm 1E-13}. It can be seen that the performance is worse on the points near the origin due to the maximum distance from the origin to $W_2$ and the zero initial value assumption in our algorithm. In Figure \ref{q2de1}(d), we select the noise level $\delta=$ 1E-6, 1E-5, 1E-4, 1E-3 and regularization parameter $\alpha=0.1\delta^2$ and draw the relationship between $|log(\delta)|^{-1}$ and $\|q_{\alpha}^{\delta}-q\|_{L^{\infty}(\Omega)}$. From the Figure \ref{q2de1} we shall see that it close to proportional relationship and verifies Theorem \ref{stabt2}.
\begin{figure}[htbp] 
{\begin{minipage}[b]{.48\linewidth}
  \centering
  \centerline{\includegraphics[width=6.0cm]{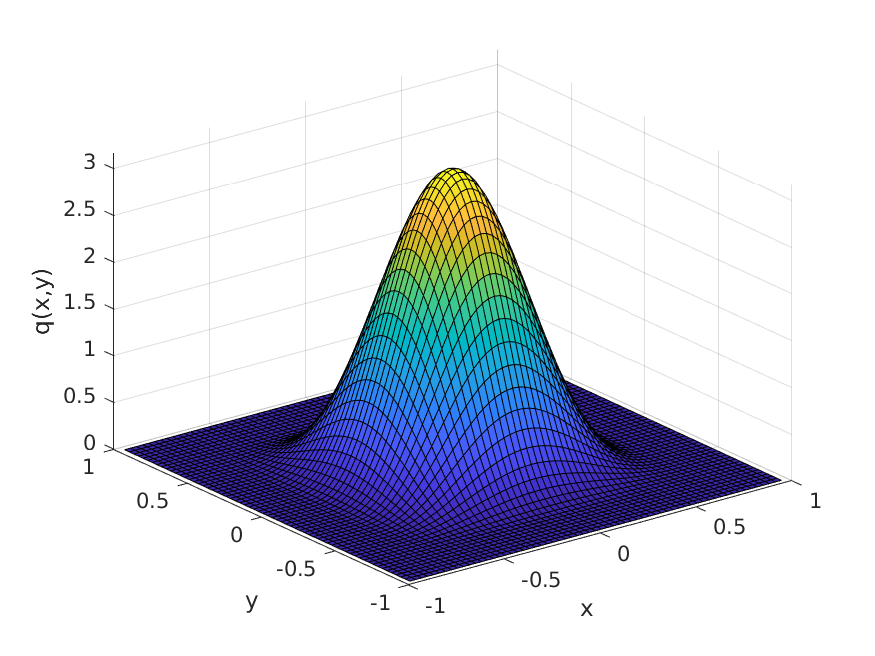}}
%  \vspace{1.5cm}
  \centerline{(a)}\medskip
\end{minipage}
\hfill              
\begin{minipage}[b]{0.48\linewidth}
  \centering
  \centerline{\includegraphics[width=6.0cm]{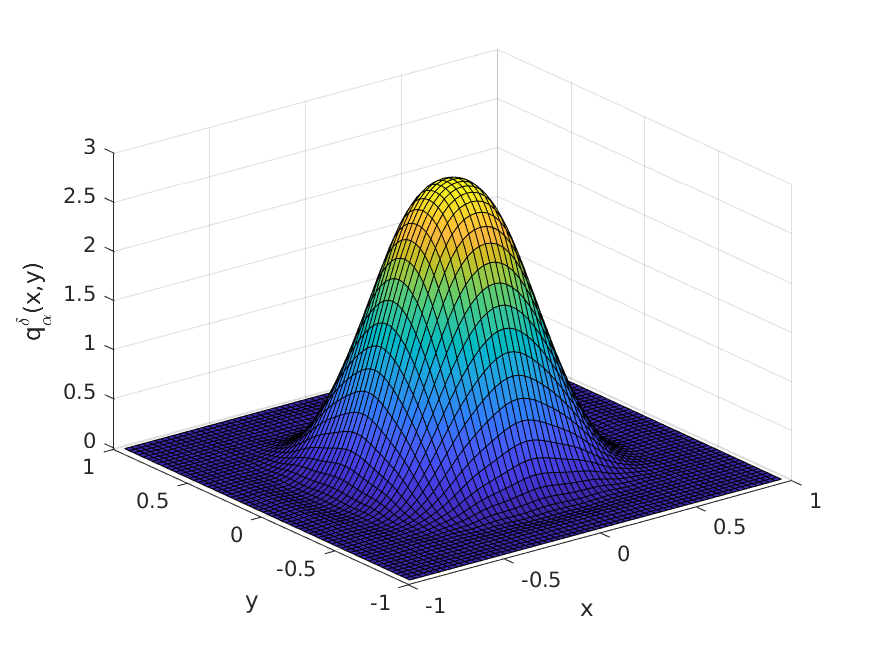}}
%  \vspace{1.5cm}
  \centerline{(b)}
  \medskip
\end{minipage}}
\vfill
{\begin{minipage}[b]{.48\linewidth}
  \centering
  \centerline{\includegraphics[width=6.0cm]{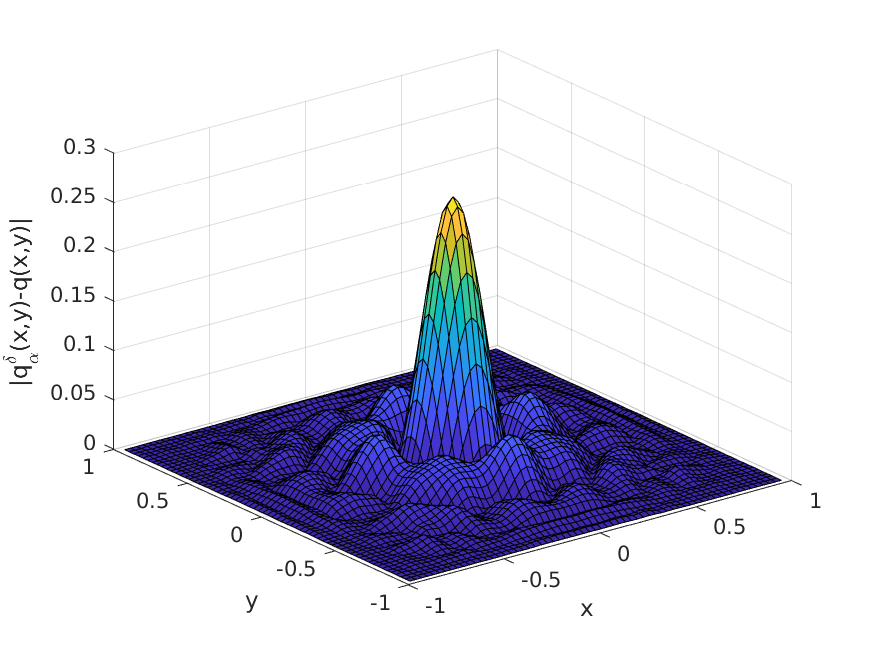}}
%  \vspace{1.5cm}
  \centerline{(c)}\medskip
\end{minipage}
\hfill              
\begin{minipage}[b]{0.48\linewidth}
  \centering
  \centerline{\includegraphics[width=6.0cm]{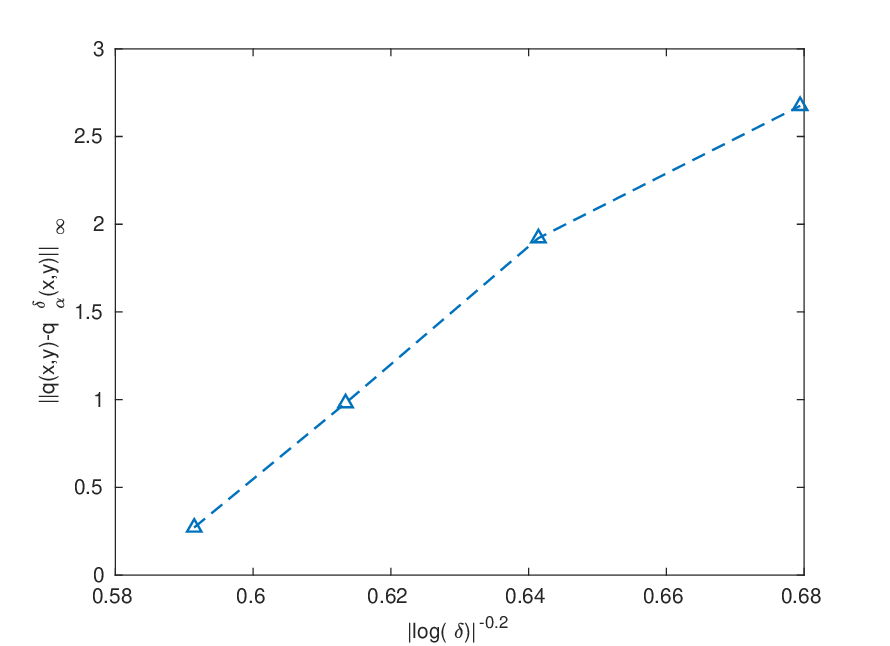}}
%  \vspace{1.5cm}
  \centerline{(d)}
  \medskip
\end{minipage}}
\caption{The numerical results of two-dimensional fraction Calder\'on problem for Example 4.4, (a) the real potential $q(x,y)$; (b) the reconstruction potential $q_{\alpha}^{\delta}(x,y)$ with $\delta$=1E-6,$\alpha$=1E-13; (c) the reconstruction error $|q_{\alpha}^{\delta}(x,y)-q(x,y)|$; (d) the relationship between $|log(\delta)|^{-1}$ and $\|q_{\alpha}^{\delta}-q\|_{L^{\infty}(\Omega)}$ with $\alpha=0.1\delta^2$.}
\label{q2de1}
\end{figure}

\section{\bf Conclusions}
In this work, by introducing the Tikhonov regularization functional, we propose a numerical method to reconstruct the potential for fractional Calder\'{o}n problem under a single measurement in  one-dimensional and two-dimensional cases. By choosing $\alpha\sim\delta^2$, we obtain the logarithmic stability. Conjugate gradient method is used to search the approximation of the regularized solution. The numerical experiments for both one-dimension and two-dimensional cases show the effectiveness of our proposed method.

  \end{document}